\newtheorem{Remark}{Remark}
\newtheorem{Problem}{Problem}
\newtheorem{Theorem}{Theorem}
\newtheorem{Lemma}{Lemma}
\newtheorem{Assumption}{Assumption}
\let\pdfoutput=\undefined\fi
\chardef\@x10\chardef\@xv60
\def\tcitime{
\def\@time{%
  \@minute\time\@hour\@minute\divide\@hour\@xv
  \ifnum\@hour<\@x 0\fi\the\@hour:%
  \multiply\@hour\@xv\advance\@minute-\@hour
  \ifnum\@minute<\@x 0\fi\the\@minute
  }}%
\def\x@hyperref#1#2#3{%
   \catcode`\~ = 12
   \catcode`\$ = 12
   \catcode`\_ = 12
   \catcode`\# = 12
   \catcode`\& = 12
   \catcode`\% = 12
   \y@hyperref{#1}{#2}{#3}%
}
\def\y@hyperref#1#2#3#4{%
   #2\ref{#4}#3
   \catcode`\~ = 13
   \catcode`\$ = 3
   \catcode`\_ = 8
   \catcode`\# = 6
   \catcode`\& = 4
   \catcode`\% = 14
}
\def\QCTOpt[#1]#2{%
  \def\QCTOptB{#1}
  \def\QCTOptA{#2}
}
\def\QCTNOpt#1{%
  \def\QCTOptA{#1}
  \let\QCTOptB\empty
}
\def\Qct{%
  \@ifnextchar[{%
    \QCTOpt}{\QCTNOpt}
}
\def\QCBOpt[#1]#2{%
  \def\QCBOptB{#1}%
  \def\QCBOptA{#2}%
}
\def\QCBNOpt#1{%
  \def\QCBOptA{#1}%
  \let\QCBOptB\empty
}
\def\Qcb{%
  \@ifnextchar[{%
    \QCBOpt}{\QCBNOpt}%
}
\def\PrepCapArgs{%
  \ifx\QCBOptA\empty
    \ifx\QCTOptA\empty
      {}%
    \else
      \ifx\QCTOptB\empty
        {\QCTOptA}%
      \else
        [\QCTOptB]{\QCTOptA}%
      \fi
    \fi
  \else
    \ifx\QCBOptA\empty
      {}%
    \else
      \ifx\QCBOptB\empty
        {\QCBOptA}%
      \else
        [\QCBOptB]{\QCBOptA}%
      \fi
    \fi
  \fi
}
\def\GRAPHICSPS#1{%
 \ifcase\GRAPHICSTYPE
   \special{ps: #1}%
 \or
   \special{language "PS", include "#1"}%
 \fi
}%
\def\graffile#1#2#3#4{%
    \bgroup
	   \@inlabelfalse
       \leavevmode
       \@ifundefined{bbl@deactivate}{\def~{\string~}}{\activesoff}%
        \raise -#4 \BOXTHEFRAME{%
           \hbox to #2{\raise #3\hbox to #2{\null #1\hfil}}}%
    \egroup
}%
\def\draftbox#1#2#3#4{%
 \leavevmode\raise -#4 \hbox{%
  \frame{\rlap{\protect\tiny #1}\hbox to #2%
   {\vrule height#3 width\z@ depth\z@\hfil}%
  }%
 }%
}%
\let\nographics=\@msidraft
\newif\ifwasdraft
\def\GRAPHIC#1#2#3#4#5{%
   \ifnum\@msidraft=\@ne\draftbox{#2}{#3}{#4}{#5}%
   \else\graffile{#1}{#3}{#4}{#5}%
   \fi
}
\def\addtoLaTeXparams#1{%
    \edef\LaTeXparams{\LaTeXparams #1}}%
\newif\ifBoxFrame \BoxFramefalse
\newif\ifOverFrame \OverFramefalse
\newif\ifUnderFrame \UnderFramefalse
\def\BOXTHEFRAME#1{%
   \hbox{%
      \ifBoxFrame
         \frame{#1}%
      \else
         {#1}%
      \fi
   }%
}
\def\doFRAMEparams#1{\BoxFramefalse\OverFramefalse\UnderFramefalse\readFRAMEparams#1\end}%
\def\readFRAMEparams#1{%
 \ifx#1\end%
  \let\next=\relax
  \else
  \ifx#1i\dispkind=\z@\fi
  \ifx#1d\dispkind=\@ne\fi
  \ifx#1f\dispkind=\tw@\fi
  \ifx#1t\addtoLaTeXparams{t}\fi
  \ifx#1b\addtoLaTeXparams{b}\fi
  \ifx#1p\addtoLaTeXparams{p}\fi
  \ifx#1h\addtoLaTeXparams{h}\fi
  \ifx#1X\BoxFrametrue\fi
  \ifx#1O\OverFrametrue\fi
  \ifx#1U\UnderFrametrue\fi
  \ifx#1w
    \ifnum\@msidraft=1\wasdrafttrue\else\wasdraftfalse\fi
    \@msidraft=\@ne
  \fi
  \let\next=\readFRAMEparams
  \fi
 \next
 }%
\def\IFRAME#1#2#3#4#5#6{%
      \bgroup
      \let\QCTOptA\empty
      \let\QCTOptB\empty
      \let\QCBOptA\empty
      \let\QCBOptB\empty
      #6%
      \parindent=0pt
      \leftskip=0pt
      \rightskip=0pt
      \setbox0=\hbox{\QCBOptA}%
      \@tempdima=#1\relax
      \ifOverFrame
          \typeout{This is not implemented yet}%
          \show\HELP
      \else
         \ifdim\wd0>\@tempdima
            \advance\@tempdima by \@tempdima
            \ifdim\wd0 >\@tempdima
               \setbox1 =\vbox{%
                  \unskip\hbox to \@tempdima{\hfill\GRAPHIC{#5}{#4}{#1}{#2}{#3}\hfill}%
                  \unskip\hbox to \@tempdima{\parbox[b]{\@tempdima}{\QCBOptA}}%
               }%
               \wd1=\@tempdima
            \else
               \textwidth=\wd0
               \setbox1 =\vbox{%
                 \noindent\hbox to \wd0{\hfill\GRAPHIC{#5}{#4}{#1}{#2}{#3}\hfill}\\%
                 \noindent\hbox{\QCBOptA}%
               }%
               \wd1=\wd0
            \fi
         \else
            \ifdim\wd0>0pt
              \hsize=\@tempdima
              \setbox1=\vbox{%
                \unskip\GRAPHIC{#5}{#4}{#1}{#2}{0pt}%
                \break
                \unskip\hbox to \@tempdima{\hfill \QCBOptA\hfill}%
              }%
              \wd1=\@tempdima
           \else
              \hsize=\@tempdima
              \setbox1=\vbox{%
                \unskip\GRAPHIC{#5}{#4}{#1}{#2}{0pt}%
              }%
              \wd1=\@tempdima
           \fi
         \fi
         \@tempdimb=\ht1
         \advance\@tempdimb by -#2
         \advance\@tempdimb by #3
         \leavevmode
         \raise -\@tempdimb \hbox{\box1}%
      \fi
      \egroup%
}%
\def\DFRAME#1#2#3#4#5{%
  \vspace\topsep
  \hfil\break
  \bgroup
     \leftskip\@flushglue
	 \rightskip\@flushglue
	 \parindent\z@
	 \parfillskip\z@skip
     \let\QCTOptA\empty
     \let\QCTOptB\empty
     \let\QCBOptA\empty
     \let\QCBOptB\empty
	 \vbox\bgroup
        \ifOverFrame 
           #5\QCTOptA\par
        \fi
        \GRAPHIC{#4}{#3}{#1}{#2}{\z@}%
        \ifUnderFrame 
           \break#5\QCBOptA
        \fi
	 \egroup
  \egroup
  \vspace\topsep
  \break
}%
\def\FFRAME#1#2#3#4#5#6#7{%
  \@ifundefined{floatstyle}
    {
     \begin{figure}[#1]%
    }
    {
	 \ifx#1h
      \begin{figure}[H]%
	 \else
      \begin{figure}[#1]%
	 \fi
	}
  \let\QCTOptA\empty
  \let\QCTOptB\empty
  \let\QCBOptA\empty
  \let\QCBOptB\empty
  \ifOverFrame
    #4
    \ifx\QCTOptA\empty
    \else
      \ifx\QCTOptB\empty
        \caption{\QCTOptA}%
      \else
        \caption[\QCTOptB]{\QCTOptA}%
      \fi
    \fi
    \ifUnderFrame\else
      \label{#5}%
    \fi
  \else
    \UnderFrametrue%
  \fi
  \begin{center}\GRAPHIC{#7}{#6}{#2}{#3}{\z@}\end{center}%
  \ifUnderFrame
    #4
    \ifx\QCBOptA\empty
      \caption{}%
    \else
      \ifx\QCBOptB\empty
        \caption{\QCBOptA}%
      \else
        \caption[\QCBOptB]{\QCBOptA}%
      \fi
    \fi
    \label{#5}%
  \fi
  \end{figure}%
 }%
\def\makeactives{
  \catcode`\"=\active
  \catcode`\;=\active
  \catcode`\:=\active
  \catcode`\'=\active
  \catcode`\~=\active
}
   \gdef\activesoff{%
      \def"{\string"}%
      \def;{\string;}%
      \def:{\string:}%
      \def'{\string'}%
      \def~{\string~}%
    }
\def\FRAME#1#2#3#4#5#6#7#8{%
 \bgroup
 \ifnum\@msidraft=\@ne
   \wasdrafttrue
 \else
   \wasdraftfalse%
 \fi
 \def\LaTeXparams{}%
 \dispkind=\z@
 \def\LaTeXparams{}%
 \doFRAMEparams{#1}%
 \ifnum\dispkind=\z@\IFRAME{#2}{#3}{#4}{#7}{#8}{#5}\else
  \ifnum\dispkind=\@ne\DFRAME{#2}{#3}{#7}{#8}{#5}\else
   \ifnum\dispkind=\tw@
    \edef\@tempa{\noexpand\FFRAME{\LaTeXparams}}%
    \@tempa{#2}{#3}{#5}{#6}{#7}{#8}%
    \fi
   \fi
  \fi
  \ifwasdraft\@msidraft=1\else\@msidraft=0\fi{}%
  \egroup
 }%
\def\TEXUX#1{"texux"}
\long\def\QQQ#1#2{%
     \long\expandafter\def\csname#1\endcsname{#2}}%
\long\def\QQA#1#2{}%
\def\QTR#1#2{{\csname#1\endcsname {#2}}}%
\def\EXPAND#1[#2]#3{}%
\def\NOEXPAND#1[#2]#3{}%
\def\LaTeXparent#1{}%
\def\ChildStyles#1{}%
\def\ChildDefaults#1{}%
\def\QTagDef#1#2#3{}%
  \providecommand{\UNICODE}[2][]{\protect\rule{.1in}{.1in}}
  \providecommand{\U}[1]{\protect\rule{.1in}{.1in}}
\def\QQfnmark#1{\footnotemark}
 \def\abstract{%
  \if@twocolumn
   \section*{Abstract (Not appropriate in this style!)}%
   \else \small 
   \begin{center}{\bf Abstract\vspace{-.5em}\vspace{\z@}}\end{center}%
   \quotation 
   \fi
  }%
   \def\registered{\relax\ifmmode{}\r@gistered
                    \else$\m@th\r@gistered$\fi}%
 \def\r@gistered{^{\ooalign
  {\hfil\raise.07ex\hbox{$\scriptstyle\rm\text{R}$}\hfil\crcr
  \mathhexbox20D}}}}{}%
\newdimen\theight
\def\newfmtname{LaTeX2e}
  \DeclareOldFontCommand{\rm}{\normalfont\rmfamily}{\mathrm}
  \DeclareOldFontCommand{\sf}{\normalfont\sffamily}{\mathsf}
  \DeclareOldFontCommand{\tt}{\normalfont\ttfamily}{\mathtt}
  \DeclareOldFontCommand{\bf}{\normalfont\bfseries}{\mathbf}
  \DeclareOldFontCommand{\it}{\normalfont\itshape}{\mathit}
  \DeclareOldFontCommand{\sl}{\normalfont\slshape}{\@nomath\sl}
  \DeclareOldFontCommand{\sc}{\normalfont\scshape}{\@nomath\sc}
\def\alpha{{\Greekmath 010B}}%
\def\beta{{\Greekmath 010C}}%
\def\gamma{{\Greekmath 010D}}%
\def\delta{{\Greekmath 010E}}%
\def\epsilon{{\Greekmath 010F}}%
\def\zeta{{\Greekmath 0110}}%
\def\eta{{\Greekmath 0111}}%
\def\theta{{\Greekmath 0112}}%
\def\iota{{\Greekmath 0113}}%
\def\kappa{{\Greekmath 0114}}%
\def\lambda{{\Greekmath 0115}}%
\def\mu{{\Greekmath 0116}}%
\def\nu{{\Greekmath 0117}}%
\def\xi{{\Greekmath 0118}}%
\def\pi{{\Greekmath 0119}}%
\def\rho{{\Greekmath 011A}}%
\def\sigma{{\Greekmath 011B}}%
\def\tau{{\Greekmath 011C}}%
\def\upsilon{{\Greekmath 011D}}%
\def\phi{{\Greekmath 011E}}%
\def\chi{{\Greekmath 011F}}%
\def\psi{{\Greekmath 0120}}%
\def\omega{{\Greekmath 0121}}%
\def\varepsilon{{\Greekmath 0122}}%
\def\vartheta{{\Greekmath 0123}}%
\def\varpi{{\Greekmath 0124}}%
\def\varrho{{\Greekmath 0125}}%
\def\varsigma{{\Greekmath 0126}}%
\def\varphi{{\Greekmath 0127}}%
\def\nabla{{\Greekmath 0272}}
\def\FindBoldGroup{%
   {\setbox0=\hbox{$\mathbf{x\global\edef\theboldgroup{\the\mathgroup}}$}}%
}
\def\Greekmath#1#2#3#4{%
    \if@compatibility
        \ifnum\mathgroup=\symbold
           \mathchoice{\mbox{\boldmath$\displaystyle\mathchar"#1#2#3#4$}}%
                      {\mbox{\boldmath$\textstyle\mathchar"#1#2#3#4$}}%
                      {\mbox{\boldmath$\scriptstyle\mathchar"#1#2#3#4$}}%
                      {\mbox{\boldmath$\scriptscriptstyle\mathchar"#1#2#3#4$}}%
        \else
           \mathchar"#1#2#3#4%
        \fi 
    \else 
        \FindBoldGroup
        \ifnum\mathgroup=\theboldgroup 
           \mathchoice{\mbox{\boldmath$\displaystyle\mathchar"#1#2#3#4$}}%
                      {\mbox{\boldmath$\textstyle\mathchar"#1#2#3#4$}}%
                      {\mbox{\boldmath$\scriptstyle\mathchar"#1#2#3#4$}}%
                      {\mbox{\boldmath$\scriptscriptstyle\mathchar"#1#2#3#4$}}%
        \else
           \mathchar"#1#2#3#4%
        \fi     	    
	  \fi}
\newif\ifGreekBold  \GreekBoldfalse
\let\SAVEPBF=\pbf
\def\pbf{\GreekBoldtrue\SAVEPBF}%
  \newcounter{equationnumber}  
  \def\mathletters{%
     \addtocounter{equation}{1}
     \edef\@currentlabel{\theequation}%
     \setcounter{equationnumber}{\c@equation}
     \setcounter{equation}{0}%
     \edef\theequation{\@currentlabel\noexpand\alph{equation}}%
  }
    \def\BibTeX{{\rm B\kern-.05em{\sc i\kern-.025em b}\kern-.08em
                 T\kern-.1667em\lower.7ex\hbox{E}\kern-.125emX}}}{}%
\def\AmS{{\protect\usefont{OMS}{cmsy}{m}{n}%
                A\kern-.1667em\lower.5ex\hbox{M}\kern-.125emS}}}{}%
\def\@@eqncr{\let\@tempa\relax
    \ifcase\@eqcnt \def\@tempa{& & &}\or \def\@tempa{& &}%
      \else \def\@tempa{&}\fi
     \@tempa
     \if@eqnsw
        \iftag@
           \@taggnum
        \else
           \@eqnnum\stepcounter{equation}%
        \fi
     \fi
     \global\tag@false
     \global\@eqnswtrue
     \global\@eqcnt\z@\cr}
\def\TCItag{\@ifnextchar*{\@TCItagstar}{\@TCItag}}
\def\@TCItag#1{%
    \global\tag@true
    \global\def\@taggnum{(#1)}%
    \global\def\@currentlabel{#1}}
\def\@TCItagstar*#1{%
    \global\tag@true
    \global\def\@taggnum{#1}%
    \global\def\@currentlabel{#1}}
\def\tint{\msi@int\textstyle\int}%
\def\tiint{\msi@int\textstyle\iint}%
\def\tiiint{\msi@int\textstyle\iiint}%
\def\tiiiint{\msi@int\textstyle\iiiint}%
\def\tidotsint{\msi@int\textstyle\idotsint}%
\def\toint{\msi@int\textstyle\oint}%
\newtoks\temptoksa
\newtoks\temptoksb
\newtoks\temptoksc
\def\msi@int#1#2{%
 \def\@temp{{#1#2\the\temptoksc_{\the\temptoksa}^{\the\temptoksb}}}%
 \futurelet\@nextcs
 \@int
}
\def\@int{%
   \ifx\@nextcs\limits
      \typeout{Found limits}%
      \temptoksc={\limits}%
	  \let\@next\@intgobble%
   \else\ifx\@nextcs\nolimits
      \typeout{Found nolimits}%
      \temptoksc={\nolimits}%
	  \let\@next\@intgobble%
   \else
      \typeout{Did not find limits or no limits}%
      \temptoksc={}%
      \let\@next\msi@limits%
   \fi\fi
   \@next   
}%
\def\@intgobble#1{%
   \typeout{arg is #1}%
   \msi@limits
}
\def\msi@limits{%
   \temptoksa={}%
   \temptoksb={}%
   \@ifnextchar_{\@limitsa}{\@limitsb}%
}
\def\@limitsa_#1{%
   \temptoksa={#1}%
   \@ifnextchar^{\@limitsc}{\@temp}%
}
\def\@limitsb{%
   \@ifnextchar^{\@limitsc}{\@temp}%
}
\def\@limitsc^#1{%
   \temptoksb={#1}%
   \@ifnextchar_{\@limitsd}{\@temp}%
}
\def\@limitsd_#1{%
   \temptoksa={#1}%
   \@temp
}
\def\dint{\msi@int\displaystyle\int}%
\def\diint{\msi@int\displaystyle\iint}%
\def\diiint{\msi@int\displaystyle\iiint}%
\def\diiiint{\msi@int\displaystyle\iiiint}%
\def\didotsint{\msi@int\displaystyle\idotsint}%
\def\doint{\msi@int\displaystyle\oint}%
\def\ExitTCILatex{\makeatother }
\if@compatibility\message{amsmath already loaded}\fi\aftergroup\ExitTCILatex}
\if@compatibility\message{amstex already loaded}\fi\aftergroup\ExitTCILatex}
\if@compatibility\message{amsgen already loaded}\fi\aftergroup\ExitTCILatex}
\let\DOTSI\relax
\def\RIfM@{\relax\ifmmode}%
\def\FN@{\futurelet\next}%
\def\iint{\DOTSI\intno@\tw@\FN@\ints@}%
\def\iiint{\DOTSI\intno@\thr@@\FN@\ints@}%
\def\iiiint{\DOTSI\intno@4 \FN@\ints@}%
\def\idotsint{\DOTSI\intno@\z@\FN@\ints@}%
\def\ints@{\findlimits@\ints@@}%
\newif\iflimtoken@
\newif\iflimits@
\def\findlimits@{\limtoken@true\ifx\next\limits\limits@true
 \else\ifx\next\nolimits\limits@false\else
 \limtoken@false\ifx\ilimits@\nolimits\limits@false\else
 \ifinner\limits@false\else\limits@true\fi\fi\fi\fi}%
\def\multint@{\int\ifnum\intno@=\z@\intdots@                          
 \else\intkern@\fi                                                    
 \ifnum\intno@>\tw@\int\intkern@\fi                                   
 \ifnum\intno@>\thr@@\int\intkern@\fi                                 
 \int}
\def\multintlimits@{\intop\ifnum\intno@=\z@\intdots@\else\intkern@\fi
 \ifnum\intno@>\tw@\intop\intkern@\fi
 \ifnum\intno@>\thr@@\intop\intkern@\fi\intop}%
\def\intic@{%
    \mathchoice{\hskip.5em}{\hskip.4em}{\hskip.4em}{\hskip.4em}}%
\def\negintic@{\mathchoice
 {\hskip-.5em}{\hskip-.4em}{\hskip-.4em}{\hskip-.4em}}%
\def\ints@@{\iflimtoken@                                              
 \def\ints@@@{\iflimits@\negintic@
   \mathop{\intic@\multintlimits@}\limits                             
  \else\multint@\nolimits\fi                                          
  \eat@}
 \else                                                                
 \def\ints@@@{\iflimits@\negintic@
  \mathop{\intic@\multintlimits@}\limits\else
  \multint@\nolimits\fi}\fi\ints@@@}%
\def\intkern@{\mathchoice{\!\!\!}{\!\!}{\!\!}{\!\!}}%
\def\plaincdots@{\mathinner{\cdotp\cdotp\cdotp}}%
\def\intdots@{\mathchoice{\plaincdots@}%
 {{\cdotp}\mkern1.5mu{\cdotp}\mkern1.5mu{\cdotp}}%
 {{\cdotp}\mkern1mu{\cdotp}\mkern1mu{\cdotp}}%
 {{\cdotp}\mkern1mu{\cdotp}\mkern1mu{\cdotp}}}%
\def\RIfM@{\relax\protect\ifmmode}
\def\text{\RIfM@\expandafter\text@\else\expandafter\mbox\fi}
\let\nfss@text\text
\def\text@#1{\mathchoice
   {\textdef@\displaystyle\f@size{#1}}%
   {\textdef@\textstyle\tf@size{\firstchoice@false #1}}%
   {\textdef@\textstyle\sf@size{\firstchoice@false #1}}%
   {\textdef@\textstyle \ssf@size{\firstchoice@false #1}}%
   \glb@settings}
\def\textdef@#1#2#3{\hbox{{%
                    \everymath{#1}%
                    \let\f@size#2\selectfont
                    #3}}}
\newif\iffirstchoice@
\def\Let@{\relax\iffalse{\fi\let\\=\cr\iffalse}\fi}%
\def\vspace@{\def\vspace##1{\crcr\noalign{\vskip##1\relax}}}%
\def\multilimits@{\bgroup\vspace@\Let@
 \baselineskip\fontdimen10 \scriptfont\tw@
 \advance\baselineskip\fontdimen12 \scriptfont\tw@
 \lineskip\thr@@\fontdimen8 \scriptfont\thr@@
 \lineskiplimit\lineskip
 \vbox\bgroup\ialign\bgroup\hfil$\m@th\scriptstyle{##}$\hfil\crcr}%
\def\Sb{_\multilimits@}%
\def\endSb{\crcr\egroup\egroup\egroup}%
\def\Sp{^\multilimits@}%
\newdimen\ex@
\def\rightarrowfill@#1{$#1\m@th\mathord-\mkern-6mu\cleaders
 \hbox{$#1\mkern-2mu\mathord-\mkern-2mu$}\hfill
 \mkern-6mu\mathord\rightarrow$}%
\def\leftarrowfill@#1{$#1\m@th\mathord\leftarrow\mkern-6mu\cleaders
 \hbox{$#1\mkern-2mu\mathord-\mkern-2mu$}\hfill\mkern-6mu\mathord-$}%
\def\leftrightarrowfill@#1{$#1\m@th\mathord\leftarrow
\mkern-6mu\cleaders
 \hbox{$#1\mkern-2mu\mathord-\mkern-2mu$}\hfill
 \mkern-6mu\mathord\rightarrow$}%
\def\overrightarrow{\mathpalette\overrightarrow@}%
\def\overrightarrow@#1#2{\vbox{\ialign{##\crcr\rightarrowfill@#1\crcr
 \noalign{\kern-\ex@\nointerlineskip}$\m@th\hfil#1#2\hfil$\crcr}}}%
\def\overleftarrow{\mathpalette\overleftarrow@}%
\def\overleftarrow@#1#2{\vbox{\ialign{##\crcr\leftarrowfill@#1\crcr
 \noalign{\kern-\ex@\nointerlineskip}$\m@th\hfil#1#2\hfil$\crcr}}}%
\def\overleftrightarrow{\mathpalette\overleftrightarrow@}%
\def\overleftrightarrow@#1#2{\vbox{\ialign{##\crcr
   \leftrightarrowfill@#1\crcr
 \noalign{\kern-\ex@\nointerlineskip}$\m@th\hfil#1#2\hfil$\crcr}}}%
\def\underrightarrow{\mathpalette\underrightarrow@}%
\def\underrightarrow@#1#2{\vtop{\ialign{##\crcr$\m@th\hfil#1#2\hfil
  $\crcr\noalign{\nointerlineskip}\rightarrowfill@#1\crcr}}}%
\def\underleftarrow{\mathpalette\underleftarrow@}%
\def\underleftarrow@#1#2{\vtop{\ialign{##\crcr$\m@th\hfil#1#2\hfil
  $\crcr\noalign{\nointerlineskip}\leftarrowfill@#1\crcr}}}%
\def\underleftrightarrow{\mathpalette\underleftrightarrow@}%
\def\underleftrightarrow@#1#2{\vtop{\ialign{##\crcr$\m@th
  \hfil#1#2\hfil$\crcr
 \noalign{\nointerlineskip}\leftrightarrowfill@#1\crcr}}}%
\def\qopnamewl@#1{\mathop{\operator@font#1}\nlimits@}
\let\nlimits@\displaylimits
\def\setboxz@h{\setbox\z@\hbox}
\def\varlim@#1#2{\mathop{\vtop{\ialign{##\crcr
 \hfil$#1\m@th\operator@font lim$\hfil\crcr
 \noalign{\nointerlineskip}#2#1\crcr
 \noalign{\nointerlineskip\kern-\ex@}\crcr}}}}
 \def\rightarrowfill@#1{\m@th\setboxz@h{$#1-$}\ht\z@\z@
  $#1\copy\z@\mkern-6mu\cleaders
  \hbox{$#1\mkern-2mu\box\z@\mkern-2mu$}\hfill
  \mkern-6mu\mathord\rightarrow$}
\def\leftarrowfill@#1{\m@th\setboxz@h{$#1-$}\ht\z@\z@
  $#1\mathord\leftarrow\mkern-6mu\cleaders
  \hbox{$#1\mkern-2mu\copy\z@\mkern-2mu$}\hfill
  \mkern-6mu\box\z@$}
\def\projlim{\qopnamewl@{proj\,lim}}
\def\injlim{\qopnamewl@{inj\,lim}}
\def\varinjlim{\mathpalette\varlim@\rightarrowfill@}
\def\varprojlim{\mathpalette\varlim@\leftarrowfill@}
\def\varliminf{\mathpalette\varliminf@{}}
\def\varliminf@#1{\mathop{\underline{\vrule\@depth.2\ex@\@width\z@
   \hbox{$#1\m@th\operator@font lim$}}}}
\def\varlimsup{\mathpalette\varlimsup@{}}
\def\varlimsup@#1{\mathop{\overline
  {\hbox{$#1\m@th\operator@font lim$}}}}
\def\align{\@verbatim \frenchspacing\@vobeyspaces \@alignverbatim
You are using the "align" environment in a style in which it is not defined.}
\let\csname endalign*\endcsname =\endtrivlist
\def\alignat{\@verbatim \frenchspacing\@vobeyspaces \@alignatverbatim
You are using the "alignat" environment in a style in which it is not defined.}
\let\csname endalignat*\endcsname =\endtrivlist
\def\xalignat{\@verbatim \frenchspacing\@vobeyspaces \@xalignatverbatim
You are using the "xalignat" environment in a style in which it is not defined.}
\let\csname endxalignat*\endcsname =\endtrivlist
\def\gather{\@verbatim \frenchspacing\@vobeyspaces \@gatherverbatim
You are using the "gather" environment in a style in which it is not defined.}
\let\csname endgather*\endcsname =\endtrivlist
\def\multiline{\@verbatim \frenchspacing\@vobeyspaces \@multilineverbatim
You are using the "multiline" environment in a style in which it is not defined.}
\let\csname endmultiline*\endcsname =\endtrivlist
\def\arrax{\@verbatim \frenchspacing\@vobeyspaces \@arraxverbatim
You are using a type of "array" construct that is only allowed in AmS-LaTeX.}
\def\tabulax{\@verbatim \frenchspacing\@vobeyspaces \@tabulaxverbatim
You are using a type of "tabular" construct that is only allowed in AmS-LaTeX.}
\let\csname endarrax*\endcsname =\endtrivlist
\let\csname endtabulax*\endcsname =\endtrivlist
 \def\endequation{%
     \ifmmode\ifinner 
      \iftag@
        \addtocounter{equation}{-1} 
        $\hfil
           \displaywidth\linewidth\@taggnum\egroup \endtrivlist
        \global\tag@false
        \global\@ignoretrue   
      \else
        $\hfil
           \displaywidth\linewidth\@eqnnum\egroup \endtrivlist
        \global\tag@false
        \global\@ignoretrue 
      \fi
     \else   
      \iftag@
        \addtocounter{equation}{-1} 
        \eqno \hbox{\@taggnum}
        \global\tag@false%
        $$\global\@ignoretrue
      \else
        \eqno \hbox{\@eqnnum}
        $$\global\@ignoretrue
      \fi
     \fi\fi
 } 
 \newif\iftag@ \tag@false
 \def\TCItag{\@ifnextchar*{\@TCItagstar}{\@TCItag}}
 \def\@TCItag#1{%
     \global\tag@true
     \global\def\@taggnum{(#1)}%
     \global\def\@currentlabel{#1}}
 \def\@TCItagstar*#1{%
     \global\tag@true
     \global\def\@taggnum{#1}%
     \global\def\@currentlabel{#1}}
     \def\tag{\@ifnextchar*{\@tagstar}{\@tag}}
     \def\@tag#1{%
         \global\tag@true
         \global\def\@taggnum{(#1)}}
     \def\@tagstar*#1{%
         \global\tag@true
         \global\def\@taggnum{#1}}
\title{\LARGE \bf
Nash Equilibrium Seeking for Games in Second-order Systems without Velocity Measurement}
\author{Maojiao Ye, Jizhao Yin and Le Yin
\thanks{M. Ye and J. Yin are with the School of Automation, Nanjing University of Science and Technology, Nanjing 210094, P.R. China (Email: ye0003ao@e.ntu.edu.sg and yinjizhao@njust.edu.cn); L. Yin is with the School of Computer and Information Science, Southwest
University, Chongqing, China (Email: yinle0002@swu.edu.cn).}
\thanks{This work is supported by the Natural Science Foundation of China (NSFC) under Grant 61803202 and the Natural Science Foundation of Jiangsu Province, No. BK20180455.}
}
\begin{document}

\maketitle
\thispagestyle{empty}
\pagestyle{empty}

\begin{abstract}
The design of Nash equilibrium seeking strategies for games in which the involved players are of second-order integrator-type dynamics is investigated in this paper. Noticing that velocity signals are usually noisy or not available for feedback control in practical engineering systems, this paper supposes that the velocity signals are  not accessible for the players. To deal with the absence of velocity measurements, two estimators are designed, based on which Nash equilibrium seeking strategies are constructed. The first strategy is established by employing an observer, which has the same order as the players' dynamics, to estimate the unavailable system states (e.g., the players' velocities). The second strategy is designed based on a high-pass filter and is motivated by the incentive to reduce the order of the closed-loop system which in turn reduces the computation costs of the seeking algorithm. Extensions to Nash equilibrium seeking for networked games are provided. Taking the advantages of leader-following consensus protocols, it turns out that both the observer-based method and the filter-based method
can be adapted to deal with games in distributed systems, which shows the extensibility of the developed strategies. Through Lyapunov stability analysis, it is analytically proven that the players' actions can be regulated to the Nash equilibrium point and their velocities can be regulated to zero by utilizing the proposed velocity-free Nash equilibrium seeking strategies. A numerical example is provided for the verifications of the proposed algorithms.
\end{abstract}

\begin{keywords}
Nash equilibrium seeking; second-order game; without velocity measurement.
\end{keywords}

\section{INTRODUCTION}

With the rapid development of Nash equilibrium seeking algorithms in the past few years, games with second-order integrator-type players have drawn some attention recently. In \cite{12}\cite{YEICCA19}, both centralized and distributed Nash equilibrium seeking methods were developed for games with second-order integrator-type dynamics. In particular, a seeking strategy with bounded controls was constructed for the considered game as in practical engineering systems, actuators usually have limited capabilities. In \cite{11}, the authors considered a game in single-input single output dynamical systems with relative degree two. Based on a second-order dynamics with damping coefficients, a control input was designed for the game to achieve centralized Nash equilibrium seeking. It was proven that by utilizing the designed control input with full state feedback, the Nash equilibrium can be stabilized. In \cite{Y}, we considered games in which the players' dynamics appear to be heterogeneous in the sense that some players are of first-order integrator-type dynamics while the rest are second-order integrators. Based on action and velocity feedbacks, Nash equilibrium seeking strategies were proposed for both full information games and partial information games. In \cite{dMB}, games with multiple integrator-type dynamics were concerned and a Nash equilibrium seeking strategy was proposed by employing adaptive control gains.  In \cite{MB}, Nash equilibrium seeking for second-order integrator-type games was addressed by designing methods based on projection operators, consensus protocols as well as primal-dual techniques.
\textbf{However, it is worth mentioning that the above works achieve Nash equilibrium seeking by utilizing full state feedback, i.e., both the players' position information and velocity information should be measured to implement the aforementioned methods, which restricts their applications to some extent as practical situations show that it might be challenging or costly to measure the accurate velocities in real time.}

It is inadvisable to utilize velocity information as in many practical situations, velocity measurements are usually noisy, which may deteriorate the control performance. Moreover, it is costly and complex to install extra velocity sensors in some engineering systems. Actually, quite a few works have been reported to deal with the unavailability of velocity measurements for various control applications. For example, only actuator position measurement units but not velocity measurement devices are included in many commercial robotic systems (e.g., PUMA 560 robot) \cite{Lim97}. To compensate for the limited sensors installed in rigid-link flexible-joint robots, the authors employed a set of filters in the control strategy design to achieve position tracking of the robots \cite{Lim97}. With the development of robots, motion control of mechanical systems without velocity measurement has drawn increasing attention \cite{Andreev17}. Moreover, as angular velocity and relative angular velocities are absent, attitude consensus among a group of spacecraft was addressed by introducing some auxiliary dynamics in \cite{Abdessameud09}. Motivated by the fact that ship velocity measurements are usually unavailable, the authors in  \cite{DoTCST06} designed a controller to drive an underactuated ship along a prescribed path without utilizing ship velocities. Furthermore, as it is challenging to obtain velocity signals for electro-hydraulic servomechanisms, an adaptive strategy was proposed for the tracking control of electro-hydraulic servomechanisms based on extended-state-observers and backstepping techniques in \cite{Deng19}. With the lack of velocity feedback, collaborative control (e.g., consensus, formation, to mention just a few) of second-order multi-agent systems by utilizing only position information was also reported in quite a few works \cite{MeiAT}-\cite{Ren08}.

In spirit of relaxing the requirements on velocity measurements, this paper considers Nash equilibrium seeking for games in which the players are of second-order integrator-type dynamics without utilizing velocity measurements. In comparison with the existing works, the main contributions of the paper are summarized as follows.
\begin{enumerate}
  \item Nash equilibrium seeking for games with second-order integrator-type players is investigated. Compared with the existing works in \cite{12}-\cite{MB}, the velocity measurements are not utilized in the control design, which benefits the applications of games to circumstances in which the players are not equipped with any velocity measurement devices or the measured velocities are noisy. An observer-based approach and a filter-based approach  are proposed to achieve Nash equilibrium seeking based on the estimations of velocities.
  \item Stability of the Nash equilibrium under the proposed seeking strategies is analytically investigated. It is shown through Lyapunov stability analysis that the players' actions can be regulated to the Nash equilibrium and their velocities can be steered to zero by utilizing the proposed methods.
  \item Extensions to partial information games under distributed networks are discussed. By further introducing consensus protocols into the proposed algorithms, we show that both the observer-based approach and the filter-based approach can be adapted to distributed games thus verifying their extensibility. Compared with \cite{5}-\cite{6}, the proposed methods accommodate the players' dynamics without utilizing velocity measurement while in \cite{5}-\cite{6}, the seeking algorithms were designed for games with first-order integrator-type players.
\end{enumerate}

The rest of the paper is organized as follows. The problem is formulated in Section \ref{pr_f} and the main results are given in Section \ref{mar}, in which an observer-based Nash equilibrium seeking strategy and a filter-based approach are proposed for the considered game. In Section \ref{ext}, extensions of the proposed methods to games under distributed networks are provided and in Section \ref{num_ex}, numerical simulations illustrate the effectiveness of the developed algorithms. In the last, Section \ref{conc} provides concluding remarks for the paper.

\section{Problem Formulation}\label{pr_f}

\begin{Problem}
Consider a game with $N$ players in which player $i$'s action is governed by
\begin{equation}\label{d1}
\begin{aligned}
\dot{x}_i=&v_i,\\
\dot{v}_i=&u_i,
\end{aligned}
\end{equation}
for $i\in\mathbb{N}$, where $x_i\in \mathbb{R}$, $v_i\in \mathbb{R}$ and $u_i\in \mathbb{R}$ denote the action, velocity and control input of player $i,$ respectively. Moreover, $\mathbb{N}=\{1,2,\cdots,N\}$ is the set of players involved in the game. Associate player $i$  with a cost function $f_i(\mathbf{x})$, where $i\in\mathbb{N}$ and $\mathbf{x}=[x_1,x_2,\cdots,x_N]^T.$ The objective of this paper is to design Nash equilibrium seeking strategies for the considered game provided that \textbf{the players' velocity measurements are not available.}
\end{Problem}

For notational clarity, let $\mathbf{x}_{-i}=[x_1,x_2,\cdots,x_{i-1},x_{i+1},\cdots,x_{N}]^T$. Then, the Nash equilibrium $\mathbf{x}^*=(x_i^*,\mathbf{x}_{-i}^*)$ is defined as an action profile on which
\begin{equation}
f_i(x_i^*,\mathbf{x}_{-i}^*)\leq f_i(x_i,\mathbf{x}_{-i}^*),
\end{equation}
for $x_i\in \mathbb{R}, i\in\mathbb{N}$. In addition, we say that Nash equilibrium seeking for the considered game is achieved if
\begin{equation}\label{def}
\begin{aligned}
&\lim_{t\rightarrow \infty}||\mathbf{x}(t)-\mathbf{x}^*||=0,\\
&\lim_{t\rightarrow \infty}||\mathbf{v}(t)||=0,
\end{aligned}
\end{equation}
where $\mathbf{v}=[v_1,v_2,\cdots,v_N]^T$.
Furthermore, if the seeking strategy enables \eqref{def} to be satisfied by utilizing only the players' local information, we say that distributed Nash equilibrium seeking is achieved.
\begin{Remark}
Different from \cite{12}-\cite{MB} that utilized full state (including both positions and velocities) feedback in the control law, this paper supposes that the velocity measurements are not available. Note that the concerned problem is of vital importance as practical experiences have shown that velocity measurements tend to contain noises which are difficult to be filtered away. Furthermore, many engineering devices (e.g., robots, ships) are not equipped with velocity measurement units and it might be costly to install additional velocity measurement sensors.
\end{Remark}

For notational convenience, let $\mathcal{P}(\mathbf{x})=\left[\frac{\partial f_1(\mathbf{x})}{\partial x_1},\frac{\partial f_2(\mathbf{x})}{\partial x_2},\cdots,\frac{\partial f_N(\mathbf{x})}{\partial x_N}\right]^T$ and
\begin{equation*}
H(\mathbf{x})=\left[
                       \begin{array}{cccc}
                         \frac{\partial^2 f_{1}(\mathbf{x})}{\partial x_1^2} & \frac{\partial^2 f_{1}(\mathbf{x})}{\partial x_{1}\partial x_2} & \cdots & \frac{\partial^2 f_{1}(\mathbf{x})}{\partial x_{1}\partial x_N} \\
                         \frac{\partial^2 f_{2}(\mathbf{x})}{\partial x_{2}\partial x_1}& \frac{\partial^2 f_{n+2}(\mathbf{x})}{\partial x_2^2} & \cdots & \frac{\partial^2 f_{2}(\mathbf{x})}{\partial x_{2}\partial x_N} \\
                         \vdots &  & \ddots & \vdots \\
                         \frac{\partial^2 f_N(\mathbf{x})}{\partial x_N \partial x_1}  & \frac{\partial^2 f_N(\mathbf{x})}{\partial x_N \partial x_2} & \cdots & \frac{\partial^2 f_N(\mathbf{x})}{\partial x_N^2} \\
                       \end{array}
                     \right].
\end{equation*}

The following provided assumptions will be utilized to develop the main results.

\begin{Assumption}\label{Assu_1}
For each $i\in\mathbb{N},$ $f_i(\mathbf{x})$ is  twice-continuously differentiable.
\end{Assumption}

\begin{Assumption}\label{Assu_2}
There exists a positive constant $m$ such that
\begin{equation}\label{cond_1}
(\mathbf{x}-\mathbf{y})^T(\mathcal{P}(\mathbf{x})-\mathcal{P}(\mathbf{y}))\geq m||\mathbf{x}-\mathbf{y}||^2,
\end{equation}
for $\mathbf{x},\mathbf{y}\in \mathbb{R}^N.$
\end{Assumption}

\begin{Assumption}\label{Assu_3}
There exists a positive constant $h$ such that $\lVert H(\mathbf{x})\lVert$ is upper bounded by $h$, i.e., $\text{sup}_{\mathbf{x}\in \mathbb{R}^N}\lVert H(\mathbf{x})\lVert=h$.
\end{Assumption}

\begin{Remark}
Assumptions \ref{Assu_1}-\ref{Assu_3} are quite mild for games with second-order integrator-type players in the sense that Assumption \ref{Assu_3} can be easily removed by degrading the corresponding results to local/semi-global versions. Note that by Assumption \ref{Assu_3}, we get that for each $i\in\mathbb{N},$ $\frac{\partial f_i(\mathbf{x})}{\partial x_i}$ is globally Lipschitz for $\mathbf{x}\in \mathbb{R}^N$. For notational clarity, we denote the Lipschitz constant of $\frac{\partial f_i(\mathbf{x})}{\partial x_i}$ as $l_i$.
Moreover, Assumption \ref{Assu_2} serves as a commonly utilized condition that results in unique Nash equilibrium on which $\mathcal{P}(\mathbf{x}^*)=\mathbf{0}_N,$ where  $\mathbf{0}_N$ is an $N$-dimensional zero column vector \cite{5}.
\end{Remark}

\section{Main results}\label{mar}
In this section, an observer-based seeking strategy and a filter-based seeking strategy will be successively established to achieve of the goal of the paper.
\subsection{An observer-based Nash equilibrium seeking strategy}\label{mar_obser}
As the players' velocities can not be accessed for feedback in the seeking strategy, it is intuitive that we can design observers to estimate them. Based on this idea, we design the control input of player $i$ for $i\in\mathbb{N}$ as
\begin{equation}\label{d2}
u_i=-k_1\frac{\partial f_i(\mathbf{x})}{\partial x_i}-k_1 \bar{v}_i,
\end{equation}
where $\bar{v}_i$ represents player $i$'s estimate on its own velocity $v_i$ and $k_1$ is a positive constant to be further determined. Moreover, we design the velocity observer as
\begin{equation}\label{d3}
\begin{aligned}
\dot{\bar{x}}_i=&-k_2(\bar{x}_i-x_i)+\bar{v}_i,\\
\dot{\bar{v}}_i=&-k_3(\bar{x}_i-x_i)+u_i,
\end{aligned}
\end{equation}
where $\bar{x}_i$ is an auxiliary variable and $k_2,k_3$ are positive control gains.

In the following, we establish the stability of Nash equilibrium under the proposed method in \eqref{d2}-\eqref{d3}.

\begin{Theorem}\label{th1}
Suppose that Assumptions  \ref{Assu_1}-\ref{Assu_3} are satisfied and
\begin{equation}
k_1>\frac{\epsilon_2(2\epsilon_1h+h\sqrt{N}\max_{i\in\mathbb{N}}\{l_i\}+1)}{\epsilon_1(2\epsilon_2-1)},
\end{equation}
where $\epsilon_1$ and $\epsilon_2$ are arbitrary positive constants that satisfy $\epsilon_1<\frac{2m}{h\sqrt{N}\max_{i\in\mathbb{N}}\{l_i\}+1},$ $\epsilon_2>\frac{1}{2}$. Then, the Nash equilibrium seeking is achieved by \eqref{d2}-\eqref{d3}.
\end{Theorem}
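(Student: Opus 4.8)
The plan is to pass to error coordinates, observe that the observer error forms an autonomous, exponentially stable subsystem that drives the action/velocity dynamics in cascade, and then certify the whole interconnection with a composite Lyapunov function whose negativity forces the stated gain bound. First I would set $\tilde{x}_i=\bar{x}_i-x_i$, $\tilde{v}_i=\bar{v}_i-v_i$ and $\mathbf{z}=\mathbf{x}-\mathbf{x}^*$, where $\mathbf{x}^*$ is the unique Nash equilibrium with $\mathcal{P}(\mathbf{x}^*)=\mathbf{0}_N$ guaranteed by Assumption \ref{Assu_2}. Subtracting \eqref{d1} from \eqref{d3} and using \eqref{d2}, the crucial structural fact is that $u_i$ cancels in $\dot{\tilde{v}}_i$, so the error dynamics become
\[
\dot{\tilde{x}}_i=-k_2\tilde{x}_i+\tilde{v}_i,\qquad \dot{\tilde{v}}_i=-k_3\tilde{x}_i,
\]
a linear, autonomous system whose matrix has trace $-k_2<0$ and determinant $k_3>0$, hence is Hurwitz and globally exponentially stable for every $k_2,k_3>0$, independently of the rest of the loop. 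The action/velocity dynamics read $\dot{\mathbf{z}}=\mathbf{v}$ and $\dot{\mathbf{v}}=-k_1\mathcal{P}(\mathbf{x})-k_1\mathbf{v}-k_1\tilde{\mathbf{v}}$, i.e. the nominal closed loop perturbed by the decaying input $-k_1\tilde{\mathbf{v}}$.

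For the nominal part I would take
\[
V_1=\tfrac{1}{2}\|\mathbf{v}\|^2+\epsilon_1\mathbf{z}^T\mathbf{v}+\tfrac{\epsilon_1 k_1}{2}\|\mathbf{z}\|^2,
\]
which is positive definite once $k_1>\epsilon_1$ (implied by the theorem's lower bound). Differentiating along the trajectories, the indefinite $\mathbf{z}^T\mathbf{v}$ terms cancel thanks to the chosen $\|\mathbf{z}\|^2$ weight; Assumption \ref{Assu_2} with $\mathcal{P}(\mathbf{x}^*)=\mathbf{0}_N$ turns $-\epsilon_1 k_1\mathbf{z}^T\mathcal{P}(\mathbf{x})$ into $-\epsilon_1 k_1 m\|\mathbf{z}\|^2$, a damping term $-(k_1-\epsilon_1)\|\mathbf{v}\|^2$ survives, and what remains to be dominated are the cross terms $-k_1\mathbf{v}^T\mathcal{P}(\mathbf{x})$, $-k_1\mathbf{v}^T\tilde{\mathbf{v}}$ and $-\epsilon_1 k_1\mathbf{z}^T\tilde{\mathbf{v}}$. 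For the error subsystem I would append a quadratic $V_2$ obtained from the Lyapunov equation of the Hurwitz matrix above, giving $\dot{V}_2\le-\gamma\|(\tilde{\mathbf{x}},\tilde{\mathbf{v}})\|^2$ for some $\gamma>0$, and then study $V=V_1+\epsilon_2 V_2$.

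The hard part is disposing of those three cross terms without destroying the negative $\|\mathbf{z}\|^2$ and $\|\mathbf{v}\|^2$ contributions. I would bound $\|\mathcal{P}(\mathbf{x})\|=\|\mathcal{P}(\mathbf{x})-\mathcal{P}(\mathbf{x}^*)\|\le h\|\mathbf{z}\|$ via Assumption \ref{Assu_3} and, componentwise, $\le\sqrt{N}\max_{i\in\mathbb{N}}\{l_i\}\,\|\mathbf{z}\|$ via the induced Lipschitz constants noted in the Remark, and apply Young's inequality to each coupling. The $\|\mathbf{v}\|^2$ pieces are then absorbed by the damping, the $\|(\tilde{\mathbf{x}},\tilde{\mathbf{v}})\|^2$ pieces by enlarging the weight $\epsilon_2$, and the surviving $\|\mathbf{z}\|^2$ coefficient stays negative precisely when $\epsilon_1<2m/(h\sqrt{N}\max_{i\in\mathbb{N}}\{l_i\}+1)$. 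Collecting the coefficients into a quadratic form in $(\|\mathbf{z}\|,\|\mathbf{v}\|,\|\tilde{\mathbf{x}}\|,\|\tilde{\mathbf{v}}\|)$ and requiring it to be negative definite is exactly what produces the displayed lower bound on $k_1$, with $\epsilon_2>\tfrac{1}{2}$ ensuring $2\epsilon_2-1>0$ so the bound is finite and positive.

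Negative definiteness of $\dot{V}$ then yields global asymptotic convergence of $(\mathbf{z},\mathbf{v},\tilde{\mathbf{x}},\tilde{\mathbf{v}})$ to the origin, i.e. $\lim_{t\to\infty}\|\mathbf{x}(t)-\mathbf{x}^*\|=0$ and $\lim_{t\to\infty}\|\mathbf{v}(t)\|=0$, which is precisely \eqref{def}. A cleaner but less explicit alternative is a cascade argument: the error subsystem is globally exponentially stable and the $(\mathbf{z},\mathbf{v})$ subsystem is input-to-state stable with respect to $\tilde{\mathbf{v}}$ under the $k_1$-bound, so the cascade is globally asymptotically stable; I would nevertheless favor the combined-Lyapunov route here since it reproduces the exact constants appearing in the statement.
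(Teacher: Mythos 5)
Your setup matches the paper exactly: the same error coordinates $\tilde{x}_i,\tilde{v}_i$, the observation that $u_i$ cancels so the observer error is an autonomous Hurwitz linear system, and the cascade/ISS outline (the paper in fact uses precisely your ``less explicit alternative'': exponential stability of the error subsystem plus input-to-state stability of the plant subsystem with respect to $\tilde{\mathbf{v}}$, via Theorem 4.19 of Khalil). The gap is in the Lyapunov function you propose for the plant subsystem. With
\begin{equation*}
V_1=\tfrac{1}{2}\|\mathbf{v}\|^2+\epsilon_1\mathbf{z}^T\mathbf{v}+\tfrac{\epsilon_1 k_1}{2}\|\mathbf{z}\|^2,
\end{equation*}
the derivative contains the cross term $-k_1\mathbf{v}^T\mathcal{P}(\mathbf{x})$, whose coefficient \emph{scales with} $k_1$. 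After bounding $\|\mathcal{P}(\mathbf{x})\|\leq L\|\mathbf{z}\|$ with $L=\min\{h,\sqrt{N}\max_i\{l_i\}\}$, negative definiteness of the $(\|\mathbf{z}\|,\|\mathbf{v}\|)$ quadratic form requires
\begin{equation*}
\epsilon_1 k_1 m\,(k_1-\epsilon_1)>\tfrac{1}{4}k_1^2L^2,
\quad\text{i.e.}\quad
\epsilon_1 m\left(1-\tfrac{\epsilon_1}{k_1}\right)>\tfrac{L^2}{4}.
\end{equation*}
Since the left side is bounded above by $\epsilon_1 m$ no matter how large $k_1$ is, your construction needs $\epsilon_1>L^2/(4m)$. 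But the theorem asserts the result for \emph{arbitrary} $\epsilon_1<2m/(h\sqrt{N}\max_i\{l_i\}+1)$ -- in particular arbitrarily small $\epsilon_1$ -- and the two ranges are incompatible whenever $L^2(h\sqrt{N}\max_i\{l_i\}+1)\geq 8m^2$ (e.g.\ any game with $h\gg m$). So for the parameter values covered by the statement, your $\dot V$ is not negative definite for any $k_1$, and no application of Young's inequality can repair this: the obstruction is structural, because both damping terms and the cross term grow at the same rate in $k_1$.

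The missing idea is the paper's change of coordinate: take $\mathbf{v}+\mathcal{P}(\mathbf{x})$ (not $\mathbf{v}$) as the velocity-like error and use
\begin{equation*}
V_2=\tfrac{1}{2}\|\mathbf{v}+\mathcal{P}(\mathbf{x})\|^2+\tfrac{1}{2}\|\mathbf{x}-\mathbf{x}^*\|^2 .
\end{equation*}
Because the control is exactly $u_i=-k_1(\partial f_i/\partial x_i+v_i)-k_1\tilde v_i$, in these coordinates every $k_1$-scaled term enters as pure damping $-k_1\|\mathbf{v}+\mathcal{P}(\mathbf{x})\|^2$ plus the injection $-k_1(\mathbf{v}+\mathcal{P}(\mathbf{x}))^T\tilde{\mathbf{v}}$, while the indefinite cross terms, coming from $H(\mathbf{x})\mathbf{v}$ and $(\mathbf{x}-\mathbf{x}^*)^T\mathbf{v}$, have the $k_1$-\emph{independent} coefficient $h\sqrt{N}\max_i\{l_i\}+1$. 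Young's inequality with parameter $\epsilon_1$ on that cross term and with parameter $\epsilon_2$ on the $\tilde{\mathbf{v}}$ injection then yields $\dot V_2\leq-\rho_1\|\mathbf{v}+\mathcal{P}(\mathbf{x})\|^2-\rho_2\|\mathbf{x}-\mathbf{x}^*\|^2+\tfrac{k_1\epsilon_2}{2}\|\tilde{\mathbf{v}}\|^2$, and positivity of $\rho_1,\rho_2$ is \emph{exactly} the stated constraints on $\epsilon_1$ and $k_1$. The conclusion then follows by your own cascade argument, with the extra final step that $\mathbf{v}\to\mathbf{0}$ is deduced from $\mathbf{v}+\mathcal{P}(\mathbf{x})\to\mathbf{0}$ and $\mathcal{P}(\mathbf{x})\to\mathcal{P}(\mathbf{x}^*)=\mathbf{0}_N$.
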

\begin{proof}
Define the observation error as
\begin{equation}
\tilde{x}_i=\bar{x}_i-x_i,\tilde{v}_i=\bar{v}_i-v_i.
\end{equation}

Hence,
\begin{equation}
\begin{aligned}
\dot{\tilde{x}}_i=&\dot{\bar{x}}_i-\dot{x}_i\\
=&-k_2(\bar{x}_i-x_i)+\bar{v}_i-v_i\\
=&-k_2\tilde{x}_i+\tilde{v}_i,
\end{aligned}
\end{equation}
and
\begin{equation}
\begin{aligned}
\dot{\tilde{v}}_i=&\dot{\bar{v}}_i-\dot{v}_i\\
=&-k_3(\bar{x}_i-x_i)=-k_3\tilde{x}_i.
\end{aligned}
\end{equation}

For notational convenience, let $\xi_i=[\tilde{x}_i,\tilde{v}_i]^T$ and define the Lyapunov candidate function as
\begin{equation}
V_1=\sum_{i=1}^N\xi_i^TP\xi_i,
\end{equation}
where $P$ is a symmetric positive definite matrix such that
\begin{equation}
P\left[
   \begin{array}{cc}
     -k_2 & 1 \\
     -k_3 & 0 \\
   \end{array}
 \right]+\left[
   \begin{array}{cc}
     -k_2 & 1 \\
     -k_3 & 0 \\
   \end{array}
 \right]^TP=-Q,
 \end{equation}
and $Q$ is a symmetric positive definite matrix. Note that the existence of $P,Q$ can be concluded by noticing that $\left[
   \begin{array}{cc}
     -k_2 & 1 \\
     -k_3 & 0 \\
   \end{array}
 \right]$ is Hurwitz.
 Then, it can be easily obtained that
 \begin{equation}
 \dot{V}_1=-\sum_{i=1}^N\lambda_{min}(Q)||\xi_i||^2,
 \end{equation}
 from which it is clear that
 \begin{equation}
 \lim_{t\rightarrow \infty} ||\mathbf{\xi}(t)||=0,
 \end{equation}
 where $\mathbf{\xi}=[\xi_1^T,\xi_2^T,\cdots,\xi_N^T]^T.$

 To further proceed the convergence analysis, define
\begin{equation}
\begin{aligned}
V_2=&\frac{1}{2}(\mathbf{v}+\mathcal{P}(\mathbf{x}))^T(\mathbf{v}+\mathcal{P}(\mathbf{x}))\\
&+\frac{1}{2}(\mathbf{x}-\mathbf{x}^*)^T(\mathbf{x}-\mathbf{x}^*).
\end{aligned}
\end{equation}

Let $\bar{\mathbf{v}}=[\bar{v}_1,\bar{v}_2,\cdots,\bar{v}_N]^T$ and $\tilde{\mathbf{v}}=[\tilde{v}_1,\tilde{v}_2,\cdots,\tilde{v}_N]^T$. Then, the time derivative of $V_2$ along the given trajectory is
\begin{equation}
\begin{aligned}
\dot{V}_2=&(\mathbf{v}+\mathcal{P}(\mathbf{x}))^T(-k_1\mathcal{P}(\mathbf{x})-k_1\bar{\mathbf{v}}+H(\mathbf{x})\mathbf{v})\\
&+(\mathbf{x}-\mathbf{x}^*)^T\mathbf{v}\\
\leq &-k_1||\mathbf{v}+\mathcal{P}(\mathbf{x})||^2-m||\mathbf{x}-\mathbf{x}^*||^2\\
&-k_1(\mathbf{v}+\mathcal{P}(\mathbf{x}))^T\tilde{\mathbf{v}}+(\mathbf{v}+\mathcal{P}(\mathbf{x}))^TH(\mathbf{x})\mathbf{v}\\
&+(\mathbf{x}-\mathbf{x}^*)^T(\mathbf{v}+\mathcal{P}(\mathbf{x}))\\
\leq &-(k_1-h)||\mathbf{v}+\mathcal{P}(\mathbf{x})||^2-m||\mathbf{x}-\mathbf{x}^*||^2\\
&+k_1||\mathbf{v}+\mathcal{P}(\mathbf{x})||||\tilde{\mathbf{v}}||\\
&+(h\sqrt{N}\max_{i\in\mathbb{N}}\{l_i\}+1)||\mathbf{v}+\mathcal{P}(\mathbf{x})||||\mathbf{x}-\mathbf{x}^*||,
\end{aligned}
\end{equation}
by utilizing Assumptions \ref{Assu_1}-\ref{Assu_3}.

Noticing that
\begin{equation}
\begin{aligned}
&||\mathbf{v}+\mathcal{P}(\mathbf{x})||||\mathbf{x}-\mathbf{x}^*||\\
\leq & \frac{||\mathbf{v}+\mathcal{P}(\mathbf{x})||^2}{2\epsilon_1}+\frac{\epsilon_1||\mathbf{x}-\mathbf{x}^*||^2}{2},
\end{aligned}
\end{equation}
and
\begin{equation}
\begin{aligned}
&k_1||\mathbf{v}+\mathcal{P}(\mathbf{x})||||\tilde{\mathbf{v}}||\\
\leq & \frac{k_1||\mathbf{v}+\mathcal{P}(\mathbf{x})||^2}{2\epsilon_2}+\frac{k_1\epsilon_2||\tilde{\mathbf{v}}||^2}{2},
\end{aligned}
\end{equation}
where $\epsilon_1,\epsilon_2$ are positive constants that can be arbitrarily chosen, we can get that
\begin{equation}
\begin{aligned}
\dot{V}_2\leq &-\left(k_1-h-\frac{h\sqrt{N}\max_{i\in\mathbb{N}}\{l_i\}+1}{2\epsilon_1}-\frac{k_1}{2\epsilon_2}\right)||\mathbf{v}+\mathcal{P}(\mathbf{x})||^2\\
&-\left(m-\frac{(h\sqrt{N}\max_{i\in\mathbb{N}}\{l_i\}+1)\epsilon_1}{2}\right)||\mathbf{x}-\mathbf{x}^*||^2\\
&+\frac{k_1\epsilon_2}{2}||\tilde{\mathbf{v}}||^2.
\end{aligned}
\end{equation}

Let $\epsilon_1<\frac{2m}{h\sqrt{N}\max_{i\in\mathbb{N}}\{l_i\}+1}$ and $\epsilon_2>\frac{1}{2}.$ Then, for fixed $\epsilon_1,\epsilon_2$, choose $k_1>\frac{\epsilon_2(2\epsilon_1h+h\sqrt{N}\max_{i\in\mathbb{N}}\{l_i\}+1)}{\epsilon_1(2\epsilon_2-1)},$ by which $\rho_1=k_1-h-\frac{h\sqrt{N}\max_{i\in\mathbb{N}}\{l_i\}+1}{2\epsilon_1}-\frac{k_1}{2\epsilon_2}>0$ and $\rho_2=m-\frac{(h\sqrt{N}\max_{i\in\mathbb{N}}\{l_i\}+1)\epsilon_1}{2}>0.$ Hence,
\begin{equation}
\dot{V}_2\leq -\min\{\rho_1,\rho_2\}||E||^2+\frac{k_1\epsilon_2}{2}||\tilde{\mathbf{v}}||^2,
\end{equation}
where $E=[(\mathbf{v}+\mathcal{P}(\mathbf{x}))^T,(\mathbf{x}-\mathbf{x}^*)^T]^T.$

Therefore,
\begin{equation}
\dot{V}_2\leq -\frac{\min\{\rho_1,\rho_2\}}{2}||E||^2,
\end{equation}
for $||E||>\sqrt{\frac{k_1\epsilon_2}{\min\{\rho_1,\rho_2\}}}||\tilde{\mathbf{v}}||$.

Hence, by Theorem 4.19 in \cite{15}, we get that
\begin{equation}
||E(t)||\leq \beta(||E(0)||,t)+\sqrt{\frac{k_1\epsilon_2}{\min\{\rho_1,\rho_2\}}}||\tilde{\mathbf{v}}||,
\end{equation}
where $\beta(\cdot)\in \mathcal{KL}.$

Recalling that
\begin{equation}
\lim_{t\rightarrow \infty} ||\mathbf{\xi}(t)||=0,
\end{equation}
we get that
\begin{equation}
\lim_{t\rightarrow \infty}||E(t)||=0,
\end{equation}
indicating that
\begin{equation}
\lim_{t\rightarrow \infty} ||\mathbf{x}(t)-\mathbf{x}^*||=0,
\end{equation}
and
\begin{equation}
 \lim_{t\rightarrow\infty} ||\mathbf{v}(t)+\mathcal{P}(\mathbf{x})||=0.
\end{equation}

Furthermore, by $\lim_{t\rightarrow \infty} ||\mathbf{x}(t)-\mathbf{x}^*||=0, $ we get that $||\mathcal{P}(\mathbf{x})||\rightarrow 0$ as $t\rightarrow \infty,$ which further indicates that $\lim_{t\rightarrow\infty} ||\mathbf{v}(t)||=0.$ Hence, we arrive at the conclusion.
\end{proof}

In this section, the seeking strategy is designed by constructing a state observer given in \eqref{d3}. It should be noted that the observer is of the same order as the players' dynamics in \eqref{d1}. An intuitive question is whether it is possible to design reduced-order strategies, which would relax the computation costs, to achieve Nash equilibrium seeking or not. In the following section, we provide another strategy design to answer this question.

\subsection{A filter-based Nash equilibrium seeking strategy}

To further reduce the order of the Nash equilibrium seeking strategy, we design the control input of player $i$ for $i\in\mathbb{N}$ as
\begin{equation}\label{eq1}
u_i=-k_1\frac{\partial f_i(\mathbf{x})}{\partial x_i}-k_1y_i,
\end{equation}
where $k_1$ is a positive constant and
\begin{equation}\label{eq2}
y_i=-\hat{x}_i+k_2x_i,
\end{equation}
and $\hat{x}_i$ is an auxiliary variable generated by
\begin{equation}\label{eq3}
\dot{\hat{x}}_i=-k_2\hat{x}_i+k_2^2x_i,
\end{equation}
where $k_2$ is a positive constant to be further determined.

\begin{Remark}
In the control input design \eqref{eq1}, the gradient term is included for the optimization of the players' objective functions. Moreover, $y_i$ serves as an estimate of the velocity of player $i$ and is included to stabilize the system. To provide more insights on how $y_i$ is generated, we can conduct Laplace transformation for \eqref{eq2}-\eqref{eq3}. By \eqref{eq2}, we get that
\begin{equation}\label{eeq1}
Y_i(s)=-\hat{X}_i(s)+k_2X_i(s),
\end{equation}
and by \eqref{eq3}, we get that
\begin{equation}\label{eeq2}
s\hat{X}_i(s)-\hat{x}_i(0)=-k_2\hat{X}_i(s)+k_2^2X_i(s),
\end{equation}
where $s$ is the complex frequency variable and $\hat{X}_i(s)$, $X_i(s)$, $Y_i(s)$ are the signals associated with $\hat{x}_i(t)$, $x_i(t)$, $y_i(t)$ in the complex frequency domain, respectively.
By \eqref{eeq1}-\eqref{eeq2}, it can be easily calculated that
\begin{equation}
Y_i(s)=\frac{sk_2}{s+k_2}X_i(s)-\frac{1}{s+k_2}\hat{x}_i(0),
\end{equation}
where $\frac{s}{s+k_2}$ is a high-pass filter with cut-off frequency  $k_2$. This explains the generation of $y_i(t)$ and why we term the method in \eqref{eq2}-\eqref{eq3} as a filter-based seeking strategy.
\end{Remark}

The following theorem establishes the stability of the Nash equilibrium under the proposed method in \eqref{eq1}-\eqref{eq3}.

\begin{Theorem}\label{the3}
Suppose that Assumptions \ref{Assu_1}-\ref{Assu_3} are satisfied and
\begin{equation}
k_1>h+\frac{\epsilon(h\sqrt{N}\max_{i\in\mathbb{N}}\{l_i\}+1)}{2},k_2>k_1,
\end{equation}
where $\epsilon$ is an arbitrary positive constant that satisfies $\epsilon>\frac{h\sqrt{N}\max_{i\in\mathbb{N}}\{l_i\}+1}{2m}$.
Then, the Nash equilibrium seeking is achieved by \eqref{eq1}-\eqref{eq3}.
\end{Theorem}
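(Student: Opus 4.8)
The plan is to follow the template of the proof of Theorem \ref{th1}, with one structural change forced by the filter: unlike the observer, the velocity-estimation error here is \emph{not} governed by an autonomous subsystem, so the cascade/ISS argument of Theorem \ref{th1} does not apply directly and a single coupled Lyapunov function is needed. First I would introduce the estimation error $\mathbf{z}=\mathbf{y}-\mathbf{v}$, where $\mathbf{y}=[y_1,\dots,y_N]^T$, since \eqref{eq1} shows that $y_i$ plays the role of the velocity estimate. Differentiating $y_i=-\hat{x}_i+k_2x_i$ in \eqref{eq2} and substituting \eqref{eq3} yields $\dot{y}_i=-k_2y_i+k_2v_i$, so that $\dot{z}_i=-k_2z_i-u_i$. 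Inserting $u_i=-k_1\frac{\partial f_i(\mathbf{x})}{\partial x_i}-k_1y_i$ and writing $W=\mathbf{v}+\mathcal{P}(\mathbf{x})$ and $X=\mathbf{x}-\mathbf{x}^*$, the closed loop becomes $\dot{W}=-k_1W-k_1\mathbf{z}+H(\mathbf{x})\mathbf{v}$, $\dot{X}=\mathbf{v}$, and $\dot{\mathbf{z}}=-(k_2-k_1)\mathbf{z}+k_1W$. The key observation, which explains the requirement $k_2>k_1$, is that the self-damping of the error dynamics is $-(k_2-k_1)<0$, and that the coupling between $W$ and $\mathbf{z}$ enters skew-symmetrically with the common gain $k_1$.

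Next I would take the single Lyapunov candidate $V=\tfrac12\|W\|^2+\tfrac12\|X\|^2+\tfrac12\|\mathbf{z}\|^2$ (equal weighting is the crucial choice). Differentiating, the two cross terms $-k_1W^T\mathbf{z}$ from $\dot{W}$ and $+k_1\mathbf{z}^TW$ from $\dot{\mathbf{z}}$ cancel exactly. Using $X^T\mathcal{P}(\mathbf{x})=(\mathbf{x}-\mathbf{x}^*)^T(\mathcal{P}(\mathbf{x})-\mathcal{P}(\mathbf{x}^*))\ge m\|X\|^2$ from Assumption \ref{Assu_2} (with $\mathcal{P}(\mathbf{x}^*)=\mathbf{0}_N$), and writing $W^TH(\mathbf{x})\mathbf{v}=W^THW-W^TH(\mathbf{x})\mathcal{P}(\mathbf{x})$ with $\|H\|\le h$ together with the global Lipschitz bound $\|\mathcal{P}(\mathbf{x})\|\le\sqrt{N}\max_{i\in\mathbb{N}}\{l_i\}\|X\|$, I would obtain
\begin{equation}
\dot{V}\le-(k_1-h)\|W\|^2-m\|X\|^2-(k_2-k_1)\|\mathbf{z}\|^2+(h\sqrt{N}\max_{i\in\mathbb{N}}\{l_i\}+1)\|W\|\|X\|.
\end{equation}
Applying Young's inequality $\|W\|\|X\|\le\tfrac{\epsilon}{2}\|W\|^2+\tfrac{1}{2\epsilon}\|X\|^2$ to the last term turns the bound into a diagonal negative quadratic form, whose coefficients are $-(k_1-h-\tfrac{\epsilon}{2}(h\sqrt{N}\max_i\{l_i\}+1))$, $-(m-\tfrac{1}{2\epsilon}(h\sqrt{N}\max_i\{l_i\}+1))$, and $-(k_2-k_1)$. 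These are exactly rendered negative by the three hypotheses $k_1>h+\tfrac{\epsilon}{2}(h\sqrt{N}\max_i\{l_i\}+1)$, $\epsilon>\tfrac{1}{2m}(h\sqrt{N}\max_i\{l_i\}+1)$, and $k_2>k_1$, so $V$ is negative definite in $(W,X,\mathbf{z})$.

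Finally, since $V$ is positive definite and radially unbounded and $\dot V$ is negative definite, $(W,X,\mathbf{z})\to 0$ (in fact exponentially). Then $X\to 0$ gives $\mathbf{x}\to\mathbf{x}^*$; continuity and $\mathcal{P}(\mathbf{x}^*)=\mathbf{0}_N$ give $\mathcal{P}(\mathbf{x})\to 0$, whence $\mathbf{v}=W-\mathcal{P}(\mathbf{x})\to 0$, which is precisely \eqref{def}. I expect the main obstacle to be conceptual rather than computational: recognizing that the error dynamics is forced by $W$ (ruling out the separate-convergence argument of Theorem \ref{th1}) and that the remedy is a single combined Lyapunov function in which the equal weighting of $\|\mathbf{z}\|^2$ makes the $W$--$\mathbf{z}$ coupling cancel. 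Once that cancellation is seen, the remaining estimates are the same Cauchy--Schwarz, Lipschitz, and Young manipulations already used in the proof of Theorem \ref{th1}.
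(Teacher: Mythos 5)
Your proposal is correct and follows essentially the same route as the paper: the paper's proof also sets $\bar{\mathbf{y}}=\mathbf{y}-\mathbf{v}$ (your $\mathbf{z}$), uses the identical equally-weighted Lyapunov function $V=\tfrac12\|\mathbf{v}+\mathcal{P}(\mathbf{x})\|^2+\tfrac12\|\mathbf{x}-\mathbf{x}^*\|^2+\tfrac12\|\bar{\mathbf{y}}\|^2$, exploits the same exact cancellation of the $k_1$-coupling terms, and arrives at the same diagonal bound and gain conditions via the same Young's inequality. The only differences are notational ($W$, $X$, $\mathbf{z}$ versus the paper's symbols) and that you make explicit the structural point—non-autonomous error dynamics forcing a single coupled Lyapunov function rather than the ISS cascade of Theorem \ref{th1}—which the paper leaves implicit.
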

\begin{proof}
From \eqref{eq1}-\eqref{eq3}, we can obtain that the concatenated vector form of the closed-loop system can be written as
\begin{equation}\label{eq4}
\begin{aligned}
\dot{\mathbf{x}}=&\mathbf{v},\\
\dot{\mathbf{v}}=&-k_1\mathcal{P}(\mathbf{x})-k_1\mathbf{y}\\
\dot{\mathbf{y}}=&-k_2\mathbf{y}+k_2\mathbf{v},
\end{aligned}
\end{equation}
where $\mathbf{y}=[y_1,y_2,\cdots,y_N]^T.$

Define $\bar{\mathbf{y}}=\mathbf{y}-\mathbf{v}.$ Then, it can be obtained that

\begin{equation}\label{eq5}
\begin{aligned}
\dot{\mathbf{x}}=&\mathbf{v},\\
\dot{\mathbf{v}}=&-k_1\mathcal{P}(\mathbf{x})-k_1\mathbf{v}-k_1\bar{\mathbf{y}}\\
\dot{\bar{\mathbf{y}}}=&-k_2\bar{\mathbf{y}}-(-k_1\mathcal{P}(\mathbf{x})-k_1\mathbf{v}-k_1\bar{\mathbf{y}}).
\end{aligned}
\end{equation}

To establish the stability property for \eqref{eq5}, one can define the Lyapunov candidate function as
\begin{equation}\label{lya_f}
\begin{aligned}
V=&\frac{1}{2}(\mathbf{v}+\mathcal{P}(\mathbf{x}))^T(\mathbf{v}+\mathcal{P}(\mathbf{x}))\\
&+\frac{1}{2}(\mathbf{x}-\mathbf{x}^*)^T(\mathbf{x}-\mathbf{x}^*)+\frac{1}{2}\bar{\mathbf{y}}^T\bar{\mathbf{y}}.
\end{aligned}
\end{equation}

Then, the time derivative of $V$ along the trajectory of \eqref{eq5} is
\begin{equation}
\begin{aligned}
\dot{V}=&(\mathbf{x}-\mathbf{x}^*)^T(\mathbf{v}+\mathcal{P}(\mathbf{x})-\mathcal{P}(\mathbf{x}))\\
&+(\mathbf{v}+\mathcal{P}(\mathbf{x}))^T(-k_1\mathcal{P}(\mathbf{x})-k_1\mathbf{v}+H(\mathbf{x})\mathbf{v})\\
&-(\mathbf{v}+\mathcal{P}(\mathbf{x}))^Tk_1\bar{\mathbf{y}}+\bar{\mathbf{y}}^T(-k_2\bar{\mathbf{y}}-\dot{\mathbf{v}})\\
=&(\mathbf{x}-\mathbf{x}^*)^T(\mathbf{v}+\mathcal{P}(\mathbf{x}))-(\mathbf{x}-\mathbf{x}^*)^T\mathcal{P}(\mathbf{x})\\
&-k_1(\mathbf{v}+\mathcal{P}(\mathbf{x}))^T(\mathbf{v}+\mathcal{P}(\mathbf{x}))+(\mathbf{v}+\mathcal{P}(\mathbf{x}))^TH(\mathbf{x})\mathbf{v}\\
&-(\mathbf{v}+\mathcal{P}(\mathbf{x}))^Tk_1\bar{\mathbf{y}}+\bar{\mathbf{y}}^T(-k_2\bar{\mathbf{y}}-\dot{\mathbf{v}})\\
\leq &-m||\mathbf{x}-\mathbf{x}^*||^2-k_1||\mathbf{v}+\mathcal{P}(\mathbf{x})||^2-(k_2-k_1)||\bar{\mathbf{y}}||^2\\
&+||\mathbf{x}-\mathbf{x}^*||||\mathbf{v}+\mathcal{P}(\mathbf{x})||+(\mathbf{v}+\mathcal{P}(\mathbf{x}))^TH(\mathbf{x})\mathbf{v}\\
\leq &-m||\mathbf{x}-\mathbf{x}^*||^2-k_1||\mathbf{v}+\mathcal{P}(\mathbf{x})||^2\\
&-(k_2-k_1)||\bar{\mathbf{y}}||^2+h||\mathbf{v}+\mathcal{P}(\mathbf{x})||^2\\
&+(h\sqrt{N}\max_{i\in\mathbb{N}}\{l_i\}+1)||\mathbf{v}+\mathcal{P}(\mathbf{x})||||\mathbf{x}-\mathbf{x}^*||,
\end{aligned}
\end{equation}
based on Assumptions \ref{Assu_1}-\ref{Assu_3}.

Noticing that
\begin{equation}
\begin{aligned}
&(h\sqrt{N}\max_{i\in\mathbb{N}}\{l_i\}+1)||\mathbf{v}+\mathcal{P}(\mathbf{x})||||\mathbf{x}-\mathbf{x}^*||\\
\leq & \frac{h\sqrt{N}\max_{i\in\mathbb{N}}\{l_i\}+1}{2\epsilon}||\mathbf{x}-\mathbf{x}^*||^2\\
&+\frac{\epsilon(h\sqrt{N}\max_{i\in\mathbb{N}}\{l_i\}+1)}{2}||\mathbf{v}+\mathcal{P}(\mathbf{x})||^2,
\end{aligned}
\end{equation}
where $\epsilon$ is a positive constant that can be arbitrarily chosen.

Hence,
\begin{equation}
\begin{aligned}
\dot{V}\leq & -\left(m-\frac{h\sqrt{N}\max_{i\in\mathbb{N}}\{l_i\}+1}{2\epsilon}\right)||\mathbf{x}-\mathbf{x}^*||^2\\
&-\left(k_1-h-\frac{\epsilon(h\sqrt{N}\max_{i\in\mathbb{N}}\{l_i\}+1)}{2}\right)||\mathbf{v}+\mathcal{P}(\mathbf{x})||^2\\
&-(k_2-k_1)||\bar{\mathbf{y}}||^2.
\end{aligned}
\end{equation}

Let  $\epsilon>\frac{h\sqrt{N}\max_{i\in\mathbb{N}}\{l_i\}+1}{2m}$ and then for fixed $\epsilon$, choose $k_1>h+\frac{\epsilon(h\sqrt{N}\max_{i\in\mathbb{N}}\{l_i\}+1)}{2}$. Then, for fixed $k_1$, choose $k_2>k_1.$ By the above tuning rule, we get that,
\begin{equation}
\dot{V}\leq -\rho||E||^2,
\end{equation}
where $E=[(\mathbf{v}+\mathcal{P}(\mathbf{x}))^T,(\mathbf{x}-\mathbf{x}^*)^T,\bar{\mathbf{y}}^T]^T$ and $\rho=\min\{m-\frac{h\sqrt{N}\max_{i\in\mathbb{N}}\{l_i\}+1}{2\epsilon},k_1-h-\frac{\epsilon(h\sqrt{N}\max_{i\in\mathbb{N}}\{l_i\}+1)}{2},k_2-k_1\}$.

Hence,
\begin{equation}\label{lya_f1}
||E(t)||\leq e^{-\rho t}||E(0)||,
\end{equation}
by which we can obtain the conclusion.
\end{proof}

\begin{Remark}
From the proof of Theorem \ref{the3}, it can be seen that  $y_i$  in \eqref{eq1}-\eqref{eq3} can be regarded as the estimated value of $v_i.$ Therefore, \eqref{eq3} is designed to drive $y_i$ to $v_i$, which is hard to be accurately measured in practice. By \eqref{lya_f} and \eqref{lya_f1}, we get that $||E(t)||\rightarrow 0$ as $t\rightarrow \infty.$ As $\mathbf{x}(t)\rightarrow \mathbf{x}^*$ for $t\rightarrow \infty,$ we obtain that $||\mathcal{P}(\mathbf{x})||\rightarrow 0$ as $t\rightarrow \infty$ by Assumption \ref{Assu_2}.  Hence, $||\mathbf{v}(t)||\rightarrow 0$ as $t\rightarrow\infty,$ which further indicates that $||\mathbf{y}(t)||\rightarrow 0$ as $t\rightarrow \infty.$
\end{Remark}

\begin{Remark}
Compared with the observer-based approach in \eqref{d2}-\eqref{d3}, we can see that the filter-based approach in \eqref{eq1}-\eqref{eq3} is of less order.  However, it should be mentioned that there are two parameters to be tuned for the filter-based algorithm (see the statement of Theorem \ref{the3}) while the observer-based approach only requires the tuning of one parameter (see the statement in Theorem \ref{th1}).
\end{Remark}
\section{Extensions to games under distributed communication networks}\label{ext}

As the players' objective functions and the gradient values utilized the strategy design depend on all the players' actions, it is necessary to study distributed Nash equilibrium seeking for networked games provided that the players have limited access into the other ones' actions. Hence, in this section, we further consider distributed Nash equilibrium seeking by supposing that player $i$ could not directly get $x_j$ if player $j$ is not its neighbor. Under this setting, $\frac{\partial f_i(\mathbf{x})}{\partial x_i}$ is not available for feedback in the control input design as $\mathbf{x}$ is not available for player $i$. To deal with this situation, we suppose that the players are engaged in a communication network $\mathcal{G},$ defined as a pair $(\mathbb{N},\mathcal{E})$, where $\mathbb{N}$ is the node set and $\mathcal{E}\subseteq \mathbb{N}\times \mathbb{N}$ is the edge set. For an undirected graph, an edge $(i,j)\in \mathcal{E}$ if nodes $i$ and $j$ can receive information from each other. The undirected graph is connected if for any pair of vertices, there exists a path. The adjacency matrix of undirected communication $\mathcal{G}$ is $\mathcal{A}=[a_{ij}],$ where $a_{ij}=1$ if $(j,i)\in \mathcal{E}$, $a_{ij}=0$ if $(j,i)\notin \mathcal{E}$ and $a_{ii}=0$. Moreover, the Laplacian matrix of $\mathcal{G}$ is $\mathcal{L}=\mathcal{D}-\mathcal{A}$, where $\mathcal{D}$ is a diagonal matrix with its $i$th diagonal entry being $d_{ii}=\sum_{j=1}^Na_{ij}.$ In the following, we consider distributed Nash equilibrium seeking strategy design under undirected and connected communication graphs. For notational clarity, define $A_0$ as a diagonal matrix whose diagonal entries are $a_{11},a_{12},\cdots,a_{1N},a_{21},\cdots,a_{NN},$ successively. Moreover, let $I_{N\times N}$ and $\otimes$ be an $N\times N$ dimensional identity matrix and the Kronecker product, respectively. Moreover, for a symmetric real matrix $\Gamma$, $\lambda_{min}(\Gamma)$ defines the minimum eigenvalue of $\Gamma$. In the following, the observer-based method and the filter-based method will be successively adapted for distributed games.

\subsection{An observer-based approach for distributed Nash equilibrium seeking}
Based on the velocity observer design in \eqref{d2}-\eqref{d3} and the distributed seeking strategy in  \cite{5}-\cite{7}, the distributed control input of player $i$ can be designed as
\begin{equation}\label{dd2}
u_i=-k_1\frac{\partial f_i}{\partial x_i}(\mathbf{z}_i)-k_1 \bar{v}_i,
\end{equation}
where $\bar{v}_i$ represents player $i$'s estimate on its own velocity $v_i$, $\frac{\partial f_i}{\partial x_i}(\mathbf{z}_i)=\frac{\partial f_i(\mathbf{x})}{\partial x_i}\left.\right|_{\mathbf{x}=\mathbf{z}_i}$ and $\mathbf{z}_i$ is a vector representing player $i$' estimate on $\mathbf{x}$. Moreover, $\bar{x}_i,\bar{v}_i$ and $\mathbf{z}_i$ are variables generated by
\begin{equation}\label{dd3}
\begin{aligned}
\dot{\bar{x}}_i=&-k_2(\bar{x}_i-x_i)+\bar{v}_i,\\
\dot{\bar{v}}_i=&-k_3(\bar{x}_i-x_i)+u_i,\\
\dot{z}_{ij}=&-k_4(\sum_{k=1}^Na_{ik}(z_{ij}-z_{kj})+a_{ij}(z_{ij}-x_j)),
\end{aligned}
\end{equation}
where $k_4$ is a positive constant, $\mathbf{z}_i=[z_{i1},z_{i2},\cdots,z_{iN}]^T$ and $\bar{x}_i,k_2,k_3$ follows the definitions in Section \ref{mar_obser}.

\begin{Remark}
It is worth mentioning that in \eqref{dd2}-\eqref{dd3}, each player updates its action by utilizing only its local information (e.g., its own information and information from its neighbors). Compared with the strategy in \eqref{d2}-\eqref{d3}, it is clear that the strategy in \eqref{dd2}-\eqref{dd3} serves as the distributed counterpart of \eqref{d2}-\eqref{d3}.
\end{Remark}

Define
\begin{equation}
\tilde{x}_i=\bar{x}_i-x_i,\tilde{v}_i=\bar{v}_i-v_i.
\end{equation}
Then, treating $\tilde{\mathbf{v}},$ defined as $\tilde{\mathbf{v}}=[\tilde{v}_1,\tilde{v}_2,\cdots,\tilde{v}_N]^T$, as an input for the following subsystem
 \begin{equation}\label{sub_s}
 \begin{aligned}
 \dot{x}_i=&v_i,\\
 \dot{v}_i=&-k_1\frac{\partial f_i}{\partial x_i}(\mathbf{z}_i)-k_1 \tilde{v}_i-k_1v_i,\\
 \dot{z}_{ij}=&-k_4(\sum_{k=1}^Na_{ik}(z_{ij}-z_{kj})+a_{ij}(z_{ij}-x_j)),i\in\mathbb{N}
 \end{aligned}
 \end{equation}
it can be shown that \eqref{sub_s} is input-to-state stable by tuning the control gains as illustrated in the following lemma.

 \begin{Lemma}\label{lemma1}
Suppose that Assumptions  \ref{Assu_1}-\ref{Assu_3} are satisfied. Then, \eqref{sub_s} is input-to-state stable by choosing
\begin{equation}
\begin{aligned}
k_1>&\frac{2\epsilon_3}{2\epsilon_3-1}\left(h+\frac{(h\sqrt{N}\max_{i\in\mathbb{N}}\{l_i\}+1)}{2\epsilon_1}\right.\\
&\left.+\frac{\max_{i\in\mathbb{N}}\{l_i\}+\sqrt{N}}{2}\right)\\
k_4>&\frac{\sqrt{N}}{2\lambda_{min}(\mathcal{L}\otimes I_{N\times N}+A_0)}+\frac{N\max_{i\in\mathbb{N}}\{l_i\}}{2\epsilon_2\lambda_{min}(\mathcal{L}\otimes I_{N\times N}+A_0)}\\
&+\frac{k_1^2\max_{i\in\mathbb{N}}\{l_i\}}{2\lambda_{min}(\mathcal{L}\otimes I_{N\times N}+A_0)},
\end{aligned}
\end{equation}
where $\epsilon_1,\epsilon_2,\epsilon_3$ that are positive constants that satisfy
$\frac{(h\sqrt{N}\max_{i\in\mathbb{N}}\{l_i\}+1)\epsilon_1}{2}+\frac{N\max_{i\in\mathbb{N}}\{l_i\}\epsilon_2}{2}<m,$ and $\epsilon_3>\frac{1}{2}.$
\end{Lemma}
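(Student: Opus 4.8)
The plan is to reduce the lemma to an ISS-Lyapunov estimate in suitably chosen error coordinates, mirroring the structure of the proof of Theorem~\ref{th1} but accounting for the two new couplings introduced by the distributed gradient and the consensus estimator.

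First I would introduce the coordinates $\mathbf{x}-\mathbf{x}^*$, $\mathbf{v}+\mathcal{P}(\mathbf{x})$ together with the estimation error $\mathbf{\zeta}=\mathbf{z}-\mathbf{1}_N\otimes\mathbf{x}$, where $\mathbf{z}=[\mathbf{z}_1^T,\cdots,\mathbf{z}_N^T]^T$. Stacking the last line of \eqref{sub_s} gives $\dot{\mathbf{z}}=-k_4[(\mathcal{L}\otimes I_{N\times N})\mathbf{z}+A_0(\mathbf{z}-\mathbf{1}_N\otimes\mathbf{x})]$, and since $(\mathcal{L}\otimes I_{N\times N})(\mathbf{1}_N\otimes\mathbf{x})=\mathbf{0}$, the error obeys $\dot{\mathbf{\zeta}}=-k_4(\mathcal{L}\otimes I_{N\times N}+A_0)\mathbf{\zeta}-\mathbf{1}_N\otimes\mathbf{v}$. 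A preliminary step is to record that, under connectivity, $\mathcal{L}\otimes I_{N\times N}+A_0$ is symmetric positive definite: after a permutation it becomes $\mathrm{blkdiag}(\mathcal{L}+\Lambda_1,\cdots,\mathcal{L}+\Lambda_N)$ with $\Lambda_j=\mathrm{diag}(a_{1j},\cdots,a_{Nj})$, and each $\mathcal{L}+\Lambda_j$ is positive definite because $\mathcal{L}\succeq 0$ has kernel $\mathrm{span}\{\mathbf{1}_N\}$ while node $j$ has at least one neighbor, so $\Lambda_j\neq 0$ kills the constant direction. This justifies the appearance of $\lambda_{min}(\mathcal{L}\otimes I_{N\times N}+A_0)>0$ in the gain bounds.

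Next I would take the composite candidate $W=\frac{1}{2}\|\mathbf{v}+\mathcal{P}(\mathbf{x})\|^2+\frac{1}{2}\|\mathbf{x}-\mathbf{x}^*\|^2+\frac{1}{2}\mathbf{\zeta}^T\mathbf{\zeta}$ and differentiate along \eqref{sub_s}. The first two terms reproduce the computation in Theorem~\ref{th1}: using $\dot{\mathcal{P}}=H(\mathbf{x})\mathbf{v}$, Assumption~\ref{Assu_2} for $-(\mathbf{x}-\mathbf{x}^*)^T\mathcal{P}(\mathbf{x})\le -m\|\mathbf{x}-\mathbf{x}^*\|^2$, and $\|\mathcal{P}(\mathbf{x})\|\le\sqrt{N}\max_{i\in\mathbb{N}}\{l_i\}\|\mathbf{x}-\mathbf{x}^*\|$, one recovers the $h\|\mathbf{v}+\mathcal{P}(\mathbf{x})\|^2$ term, the cross term $(h\sqrt{N}\max_{i\in\mathbb{N}}\{l_i\}+1)\|\mathbf{v}+\mathcal{P}(\mathbf{x})\|\|\mathbf{x}-\mathbf{x}^*\|$, and the input term $-k_1(\mathbf{v}+\mathcal{P}(\mathbf{x}))^T\tilde{\mathbf{v}}$. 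The two genuinely new terms are the gradient-mismatch term $-k_1(\mathbf{v}+\mathcal{P}(\mathbf{x}))^T(\hat{\mathcal{P}}(\mathbf{z})-\mathcal{P}(\mathbf{x}))$, with $\hat{\mathcal{P}}(\mathbf{z})=[\frac{\partial f_1}{\partial x_1}(\mathbf{z}_1),\cdots,\frac{\partial f_N}{\partial x_N}(\mathbf{z}_N)]^T$, bounded through the componentwise Lipschitz estimate $\|\hat{\mathcal{P}}(\mathbf{z})-\mathcal{P}(\mathbf{x})\|\le\max_{i\in\mathbb{N}}\{l_i\}\|\mathbf{\zeta}\|$ from Assumption~\ref{Assu_3}; and the estimator coupling $-\mathbf{\zeta}^T(\mathbf{1}_N\otimes\mathbf{v})\le\sqrt{N}\|\mathbf{\zeta}\|\|\mathbf{v}\|$, which I would split via $\mathbf{v}=(\mathbf{v}+\mathcal{P}(\mathbf{x}))-\mathcal{P}(\mathbf{x})$ into a $\sqrt{N}\|\mathbf{\zeta}\|\|\mathbf{v}+\mathcal{P}(\mathbf{x})\|$ piece and an $N\max_{i\in\mathbb{N}}\{l_i\}\|\mathbf{\zeta}\|\|\mathbf{x}-\mathbf{x}^*\|$ piece. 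Applying Young's inequality with exactly the weights dictated by the stated gains ($\epsilon_1$ on the $\mathbf{x}$-cross term, $\epsilon_2$ on the $\mathbf{\zeta}$--$(\mathbf{x}-\mathbf{x}^*)$ term, $\epsilon_3$ on the input term, the $k_1$-weighted split $k_1\max_{i\in\mathbb{N}}\{l_i\}\|\mathbf{v}+\mathcal{P}(\mathbf{x})\|\|\mathbf{\zeta}\|\le\frac{\max_{i\in\mathbb{N}}\{l_i\}}{2}\|\mathbf{v}+\mathcal{P}(\mathbf{x})\|^2+\frac{k_1^2\max_{i\in\mathbb{N}}\{l_i\}}{2}\|\mathbf{\zeta}\|^2$ on the mismatch term, and a unit-weight split on $\sqrt{N}\|\mathbf{\zeta}\|\|\mathbf{v}+\mathcal{P}(\mathbf{x})\|$), the condition on $k_1$ makes the $\|\mathbf{v}+\mathcal{P}(\mathbf{x})\|^2$ coefficient negative, the condition $\frac{(h\sqrt{N}\max_{i\in\mathbb{N}}\{l_i\}+1)\epsilon_1}{2}+\frac{N\max_{i\in\mathbb{N}}\{l_i\}\epsilon_2}{2}<m$ makes the $\|\mathbf{x}-\mathbf{x}^*\|^2$ coefficient negative, and the condition on $k_4$ makes the $\|\mathbf{\zeta}\|^2$ coefficient negative, leaving $\dot{W}\le-c\|\Xi\|^2+\frac{k_1\epsilon_3}{2}\|\tilde{\mathbf{v}}\|^2$ with $\Xi=[(\mathbf{v}+\mathcal{P}(\mathbf{x}))^T,(\mathbf{x}-\mathbf{x}^*)^T,\mathbf{\zeta}^T]^T$. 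Since $\Xi$ is a proper change of coordinates vanishing exactly at the input-free equilibrium, this ISS-Lyapunov inequality yields the claim, invoking Theorem~4.19 in \cite{15} as in Theorem~\ref{th1}.

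The hard part is organizing the three-way coupling so that the estimator error, the velocity error, and the action error each carry a sign-definite diagonal term dominating every off-diagonal coupling. The mismatch term in particular forces a $k_1$-dependent Young weight, which produces the $k_1^2$ dependence in the $k_4$ bound; consequently the gains cannot be tuned independently, and one must fix $\epsilon_1,\epsilon_2,\epsilon_3$ first, then $k_1$, and only then $k_4$ — precisely the sequential order implicit in the lemma's hypotheses. The remaining check is that this ordering is feasible, i.e.\ that the region for the $\epsilon$'s cut out by $\frac{(h\sqrt{N}\max_{i\in\mathbb{N}}\{l_i\}+1)\epsilon_1}{2}+\frac{N\max_{i\in\mathbb{N}}\{l_i\}\epsilon_2}{2}<m$ and $\epsilon_3>\frac{1}{2}$ is nonempty and keeps the resulting $k_1$ and $k_4$ bounds finite.
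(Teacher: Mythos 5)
Your proposal is correct and follows essentially the same route as the paper's proof: the identical Lyapunov candidate $\frac{1}{2}\|\mathbf{v}+\mathcal{P}(\mathbf{x})\|^2+\frac{1}{2}\|\mathbf{x}-\mathbf{x}^*\|^2+\frac{1}{2}\|\mathbf{z}-\mathbf{1}_N\otimes\mathbf{x}\|^2$, the same term-by-term bounds (gradient mismatch via $\max_{i\in\mathbb{N}}\{l_i\}\|\mathbf{z}-\mathbf{1}_N\otimes\mathbf{x}\|$, consensus coupling split through $\mathbf{v}=(\mathbf{v}+\mathcal{P}(\mathbf{x}))-\mathcal{P}(\mathbf{x})$), the same Young weights yielding exactly the paper's $\rho_1,\rho_2,\rho_3$, the same sequential tuning, and the same appeal to Theorem~4.19 of \cite{15}. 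The only additions — your explicit justification that $\lambda_{min}(\mathcal{L}\otimes I_{N\times N}+A_0)>0$ under connectivity and the remark on the coordinate change — are sound refinements of steps the paper asserts without proof.
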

\begin{proof}
Define the Lyapunov candidate function as
\begin{equation}
\begin{aligned}
V=&\frac{1}{2}(\mathbf{v}+\mathcal{P}(\mathbf{x}))^T(\mathbf{v}+\mathcal{P}(\mathbf{x}))\\
&+\frac{1}{2}(\mathbf{x}-\mathbf{x}^*)^T(\mathbf{x}-\mathbf{x}^*)\\
&+\frac{1}{2}(\mathbf{z}-\mathbf{1}_N\otimes \mathbf{x})^T(\mathbf{z}-\mathbf{1}_N\otimes \mathbf{x}),
\end{aligned}
\end{equation}
where $\mathbf{z}=[\mathbf{z}_1^T,\mathbf{z}_2^T,\cdots,\mathbf{z}_N^T]^T,$ and $\mathcal{P}(\mathbf{z})=\left[\frac{\partial f_1}{\partial x_1}(\mathbf{z}_1),\frac{\partial f_2}{\partial x_2}(\mathbf{z}_2),\cdots,\frac{\partial f_N}{\partial x_N}(\mathbf{z}_N)\right]^T.$

Then, the time derivative of $V$ along the trajectory of \eqref{sub_s} is
\begin{equation}
\begin{aligned}
V=&(\mathbf{v}+\mathcal{P}(\mathbf{x}))^T(-k_1\mathcal{P}(\mathbf{x})-k_1\bar{\mathbf{v}}+H(\mathbf{x})\mathbf{v})\\
&+(\mathbf{x}-\mathbf{x}^*)^T\mathbf{v}-(\mathbf{z}-\mathbf{1}_N\otimes \mathbf{x})^T\times\\
&(k_4(\mathcal{L}\otimes I_{N\times N}+A_0)(\mathbf{z}-\mathbf{1}_N\otimes \mathbf{x})+\mathbf{1}_N\otimes \mathbf{v})\\
&+k_1(\mathbf{v}+\mathcal{P}(\mathbf{x}))^T (\mathcal{P}(\mathbf{x})-\mathcal{P}(\mathbf{z}))\\
\leq &-k_1||\mathbf{v}+\mathcal{P}(\mathbf{x})||^2-m||\mathbf{x}-\mathbf{x}^*||^2\\
&-k_1(\mathbf{v}+\mathcal{P}(\mathbf{x}))^T\tilde{\mathbf{v}}+(\mathbf{v}+\mathcal{P}(\mathbf{x}))^TH(\mathbf{x})\mathbf{v}\\
&+(\mathbf{x}-\mathbf{x}^*)^T(\mathbf{v}+\mathcal{P}(\mathbf{x}))\\
&-k_4\lambda_{min}(\mathcal{L}\otimes I_{N\times N}+A_0)||\mathbf{z}-\mathbf{1}_N\otimes \mathbf{x}||^2\\
&-(\mathbf{z}-\mathbf{1}_N\otimes \mathbf{x})^T\mathbf{1}_N\otimes \mathbf{v}\\
&+k_1(\mathbf{v}+\mathcal{P}(\mathbf{x}))^T (\mathcal{P}(\mathbf{x})-\mathcal{P}(\mathbf{z}))\\
\leq &-(k_1-h)||\mathbf{v}+\mathcal{P}(\mathbf{x})||^2-m||\mathbf{x}-\mathbf{x}^*||^2\\
&+k_1||\mathbf{v}+\mathcal{P}(\mathbf{x})||||\tilde{\mathbf{v}}||\\
&+(h\sqrt{N}\max_{i\in\mathbb{N}}\{l_i\}+1)||\mathbf{v}+\mathcal{P}(\mathbf{x})||||\mathbf{x}-\mathbf{x}^*||\\
&-k_4\lambda_{min}(\mathcal{L}\otimes I_{N\times N}+A_0)||\mathbf{z}-\mathbf{1}_N\otimes \mathbf{x}||^2\\
&+\sqrt{N}||\mathbf{z}-\mathbf{1}_N\otimes \mathbf{x}|| ||\mathcal{P}(\mathbf{x})+\mathbf{v}||\\
&+N\max_{i\in\mathbb{N}}\{l_i\}||\mathbf{z}-\mathbf{1}_N\otimes \mathbf{x}||||\mathbf{x}-\mathbf{x}^*||\\
&+k_1\max_{i\in\mathbb{N}}\{l_i\}||\mathbf{v}+\mathcal{P}(\mathbf{x})||||\mathbf{z}-\mathbf{1}_N\otimes \mathbf{x}||,
\end{aligned}
\end{equation}
where $\lambda_{min}(\mathcal{L}\otimes I_{N\times N}+A_0)>0$  as the communication graph $\mathcal{G}$ is undirected and connected.

Therefore,
\begin{equation}
\begin{aligned}
\dot{V}_2\leq &-\rho_1||\mathbf{v}+\mathcal{P}(\mathbf{x})||^2-\rho_2||\mathbf{x}-\mathbf{x}^*||^2\\
&-\rho_3||\mathbf{z}-\mathbf{1}_N\otimes \mathbf{x}||^2+\frac{k_1\epsilon_3}{2}||\tilde{\mathbf{v}}||^2,
\end{aligned}
\end{equation}
where $\rho_1=k_1-h-\frac{(h\sqrt{N}\max_{i\in\mathbb{N}}\{l_i\}+1)}{2\epsilon_1}-\frac{\sqrt{N}}{2}-\frac{\max_{i\in\mathbb{N}}\{l_i\}}{2}-\frac{k_1}{2\epsilon_3},$ $\rho_2=m-\frac{(h\sqrt{N}\max_{i\in\mathbb{N}}\{l_i\}+1)\epsilon_1}{2}-\frac{N\max_{i\in\mathbb{N}}\{l_i\}\epsilon_2}{2},$ $\rho_3=k_4\lambda_{min}(\mathcal{L}\otimes I_{N\times N}+A_0)-\frac{\sqrt{N}}{2}-\frac{N\max_{i\in\mathbb{N}}\{l_i\}}{2\epsilon_2}-\frac{k_1^2\max_{i\in\mathbb{N}}\{l_i\}}{2}$ and $\epsilon_1,\epsilon_2,\epsilon_3$ are positive constants that can be arbitrarily chosen. Choose $\epsilon_1,\epsilon_2$ to be sufficiently small such that $\rho_2>0$ and choose $\epsilon_3>\frac{1}{2}.$ Then, for fixed $\epsilon_1,\epsilon_3,$ choose
$k_1>\frac{2\epsilon_3}{2\epsilon_3-1}\left(h+\frac{(h\sqrt{N}\max_{i\in\mathbb{N}}\{l_i\}+1)}{2\epsilon_1}+\frac{\sqrt{N}}{2}+\frac{\max_{i\in\mathbb{N}}\{l_i\}}{2}\right).$ Then, for fixed $k_1$, choose $k_4>\frac{\sqrt{N}}{2\lambda_{min}(\mathcal{L}\otimes I_{N\times N}+A_0)}+\frac{N\max_{i\in\mathbb{N}}\{l_i\}}{2\epsilon_2\lambda_{min}(\mathcal{L}\otimes I_{N\times N}+A_0)}+\frac{k_1^2\max_{i\in\mathbb{N}}\{l_i\}}{2\lambda_{min}(\mathcal{L}\otimes I_{N\times N}+A_0)}.$ By the above tuning rule, we get that $\rho_1>0,\rho_2>0,\rho_3>0.$

Hence,
\begin{equation}
\dot{V}_2\leq -\frac{\min\{\rho_1,\rho_2,\rho_3\}}{2}||E_1||^2,
\end{equation}
for $||E_1||\geq \sqrt{\frac{k_1\epsilon_3}{\min\{\rho_1,\rho_2,\rho_3\}}}||\tilde{\mathbf{v}}||$, where $E_1=[(\mathbf{v}+\mathcal{P}(\mathbf{x}))^T, (\mathbf{x}-\mathbf{x}^*)^T,(\mathbf{z}-\mathbf{1}_N\otimes \mathbf{x})^T]^T$. Hence, by Theorem 4.19 in \cite{15}, there exists a $\mathcal{KL}$ function $\beta_1$ such that
\begin{equation}
||E_1(t)||\leq \beta_1(||E_1(0)||,t)+\sqrt{\frac{k_1\epsilon_3}{\min\{\rho_1,\rho_2,\rho_3\}}}||\tilde{\mathbf{v}}(t)||,
\end{equation}
thus arriving at the conclusion.
\end{proof}

We are now ready to provide the stability property of Nash equilibrium under the proposed method in \eqref{dd2}-\eqref{dd3}.

\begin{Theorem}\label{th4}
Suppose that Assumptions  \ref{Assu_1}-\ref{Assu_3} are satisfied. Then, the distributed Nash equilibrium seeking is achieved by utilizing \eqref{dd2}-\eqref{dd3} given that
\begin{equation}
\begin{aligned}
k_1>&\frac{2\epsilon_3}{2\epsilon_3-1}\left(h+\frac{(h\sqrt{N}\max_{i\in\mathbb{N}}\{l_i\}+1)}{2\epsilon_1}\right.\\
&\left.+\frac{\sqrt{N}+\max_{i\in\mathbb{N}}\{l_i\}}{2}\right)\\
k_4>&\frac{\sqrt{N}}{2\lambda_{min}(\mathcal{L}\otimes I_{N\times N}+A_0)}+\frac{N\max_{i\in\mathbb{N}}\{l_i\}}{2\epsilon_2\lambda_{min}(\mathcal{L}\otimes I_{N\times N}+A_0)}\\
&+\frac{k_1^2\max_{i\in\mathbb{N}}\{l_i\}}{2\lambda_{min}(\mathcal{L}\otimes I_{N\times N}+A_0)},
\end{aligned}
\end{equation}
where $\epsilon_1,\epsilon_2,\epsilon_3$ are positive constants that satisfy
$\frac{(h\sqrt{N}\max_{i\in\mathbb{N}}\{l_i\}+1)\epsilon_1}{2}+\frac{N\max_{i\in\mathbb{N}}\{l_i\}\epsilon_2}{2}<m,$ and $\epsilon_3>\frac{1}{2}.$
\end{Theorem}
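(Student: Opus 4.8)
The plan is to prove Theorem \ref{th4} by a cascade argument that glues together two facts already available: the observer-error analysis from the proof of Theorem \ref{th1}, and the input-to-state stability property of Lemma \ref{lemma1}. First I would note that the observer dynamics in \eqref{dd3} have exactly the same form as in \eqref{d3}, and that $\dot{v}_i=u_i$ still holds under the distributed control \eqref{dd2}. Hence the observation errors $\tilde{x}_i=\bar{x}_i-x_i$ and $\tilde{v}_i=\bar{v}_i-v_i$ satisfy the same decoupled linear dynamics $\dot{\tilde{x}}_i=-k_2\tilde{x}_i+\tilde{v}_i$ and $\dot{\tilde{v}}_i=-k_3\tilde{x}_i$ as before. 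Using $\xi_i=[\tilde{x}_i,\tilde{v}_i]^T$ together with the same Lyapunov function $V_1=\sum_{i=1}^N\xi_i^TP\xi_i$, where $P,Q$ are generated by the Hurwitz matrix of Theorem \ref{th1}, I would conclude $\lim_{t\rightarrow\infty}\|\mathbf{\xi}(t)\|=0$, and in particular $\lim_{t\rightarrow\infty}\|\tilde{\mathbf{v}}(t)\|=0$.

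Next I would observe that the closed loop under \eqref{dd2}-\eqref{dd3}, with $\tilde{\mathbf{v}}$ regarded as an exogenous input, is precisely the subsystem \eqref{sub_s}. Since the tuning conditions stated in Theorem \ref{th4} coincide with those of Lemma \ref{lemma1}, the lemma applies verbatim and supplies a $\mathcal{KL}$ function $\beta_1$ with
\begin{equation*}
\|E_1(t)\|\leq \beta_1(\|E_1(0)\|,t)+\sqrt{\frac{k_1\epsilon_3}{\min\{\rho_1,\rho_2,\rho_3\}}}\|\tilde{\mathbf{v}}(t)\|,
\end{equation*}
where $E_1=[(\mathbf{v}+\mathcal{P}(\mathbf{x}))^T,(\mathbf{x}-\mathbf{x}^*)^T,(\mathbf{z}-\mathbf{1}_N\otimes\mathbf{x})^T]^T$.

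I would then close the cascade. Because $\beta_1(\cdot,t)\rightarrow 0$ as $t\rightarrow\infty$ and, by the first step, the current-time input value $\|\tilde{\mathbf{v}}(t)\|\rightarrow 0$, the ISS estimate forces $\lim_{t\rightarrow\infty}\|E_1(t)\|=0$. This delivers $\lim_{t\rightarrow\infty}\|\mathbf{x}(t)-\mathbf{x}^*\|=0$, $\lim_{t\rightarrow\infty}\|\mathbf{v}(t)+\mathcal{P}(\mathbf{x})\|=0$, and the consensus property $\lim_{t\rightarrow\infty}\|\mathbf{z}-\mathbf{1}_N\otimes\mathbf{x}\|=0$. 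Finally, invoking Assumption \ref{Assu_2} through $\mathcal{P}(\mathbf{x}^*)=\mathbf{0}_N$, the convergence $\mathbf{x}\rightarrow\mathbf{x}^*$ gives $\|\mathcal{P}(\mathbf{x})\|\rightarrow 0$, whence $\lim_{t\rightarrow\infty}\|\mathbf{v}(t)\|=0$. Together with $\mathbf{x}\rightarrow\mathbf{x}^*$ this is exactly \eqref{def}, so distributed Nash equilibrium seeking is achieved.

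The structural bookkeeping is routine; the only point requiring care is the cascade (converging-input converging-state) step. The ISS bound from Lemma \ref{lemma1} depends on the \emph{current} value $\|\tilde{\mathbf{v}}(t)\|$ rather than a supremum over the trajectory, so the vanishing observer error feeds through directly and no separate small-gain or peaking argument is needed. I expect this to be the main, though mild, obstacle, and it hinges on the fact that the observation-error subsystem is autonomous, i.e., independent of $\mathbf{z}$ and of the gradient terms, so that treating $\tilde{\mathbf{v}}$ as an input in \eqref{sub_s} does not perturb the error dynamics themselves.
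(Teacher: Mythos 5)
Your proposal is correct and takes essentially the same route as the paper: the paper's proof of Theorem \ref{th4} is literally one line, invoking Lemma \ref{lemma1} together with ``similar arguments as those in the proof of Theorem \ref{th1}'', which is precisely your cascade of autonomous observer-error decay plus the ISS estimate of Lemma \ref{lemma1}. Your fleshed-out version, including the converging-input converging-state step stated with the same current-time input value $\|\tilde{\mathbf{v}}(t)\|$ that appears in Lemma \ref{lemma1}, is exactly what that compressed proof intends.
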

\begin{proof}
Based on Lemma \ref{lemma1}, the proof can be completed by following similar arguments as those in the proof of Theorem \ref{th1}.
\end{proof}

This section provides a distributed counterpart for the observer based approach. In the following, we adapt the filter-based approach for distributed games.

\subsection{A filter-based distributed Nash equilibrium seeking strategy}
Motivated by the filter-based strategy in \eqref{eq1}-\eqref{eq3}  and the distributed seeking strategy in  \cite{5}-\cite{7}, the control input of player $i$ for $i\in\mathbb{N}$ can be designed as
\begin{equation}\label{Y1}
u_i=-k_1\frac{\partial f_i}{\partial x_i}(\mathbf{z}_i)-k_1y_i,
\end{equation}
where $k_1$ is a positive constant, $\mathbf{z}_i=[z_{i1},z_{i2},\cdots,z_{iN}]^T$ and
\begin{equation}\label{Y2}
\begin{aligned}
y_i=&-\hat{x}_i+k_2x_i\\
\dot{z}_{ij}=&-k_3 \left(\sum_{k=1}^Na_{ik}(z_{ij}-z_{kj})+a_{ij}(z_{ij}-x_j)\right),
\end{aligned}
\end{equation}
where $k_2,k_3$ are positive constants and $\hat{x}_i$ is an auxiliary variable generated by
\begin{equation}\label{Y3}
\dot{\hat{x}}_i=-k_2\hat{x}_i+k_2^2x_i.
\end{equation}

The following theorem establishes the stability of the Nash equilibrium under the proposed method in \eqref{Y1}-\eqref{Y3}.
\begin{Theorem}\label{YTH4}
Suppose that Assumptions \ref{Assu_1}-\ref{Assu_3} are satisfied.  Then, the distributed Nash equilibrium seeking is achieved
by utilizing the proposed method in \eqref{Y1}-\eqref{Y3} provided that the control gains are designed according to
\begin{equation}
\begin{aligned}
k_1>&\frac{(h\sqrt{N}\max_{i\in\mathbb{N}}\{l_i\}+1)^2}{4m}+h,k_2>k_1,\\
k_3>&\frac{(2k_1\max_{i\in\mathbb{N}}\{l_i\}+\sqrt{N}+N\max_{i\in\mathbb{N}} \{l_i\})^2}{4\min\{\lambda_{min}(A),k_2-k_1\}\lambda_{min}(\mathcal{L}\otimes I_{N\times N}+A_0)},
\end{aligned}
\end{equation}
where $A=\left[
            \begin{array}{cc}
              m & -\frac{h\sqrt{N}\max_{i\in\mathbb{N}}\{l_i\}+1}{2} \\
              -\frac{h\sqrt{N}\max_{i\in\mathbb{N}}\{l_i\}+1}{2} & k_1-h \\
            \end{array}
          \right].
$
\end{Theorem}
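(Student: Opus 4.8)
The plan is to reproduce the single-Lyapunov-function argument of Theorem \ref{the3}, augmenting it with the consensus-error block that appears in Lemma \ref{lemma1}. First I would put the closed loop in concatenated form. Since $y_i=-\hat{x}_i+k_2x_i$ and $\dot{\hat{x}}_i=-k_2\hat{x}_i+k_2^2x_i$, a direct computation gives $\dot{\mathbf{y}}=-k_2\mathbf{y}+k_2\mathbf{v}$, exactly as in \eqref{eq4}. Introducing the filter error $\bar{\mathbf{y}}=\mathbf{y}-\mathbf{v}$ and the consensus error $\mathbf{e}_z=\mathbf{z}-\mathbf{1}_N\otimes\mathbf{x}$, and using $\dot{\mathbf{z}}=-k_3(\mathcal{L}\otimes I_{N\times N}+A_0)\mathbf{e}_z$, the error dynamics become $\dot{\mathbf{x}}=\mathbf{v}$, $\dot{\mathbf{v}}=-k_1\mathcal{P}(\mathbf{z})-k_1\mathbf{v}-k_1\bar{\mathbf{y}}$, and $\dot{\bar{\mathbf{y}}}=-k_2\bar{\mathbf{y}}-\dot{\mathbf{v}}$. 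I would then take the Lyapunov candidate
\[
V=\tfrac{1}{2}\|\mathbf{v}+\mathcal{P}(\mathbf{x})\|^2+\tfrac{1}{2}\|\mathbf{x}-\mathbf{x}^*\|^2+\tfrac{1}{2}\bar{\mathbf{y}}^T\bar{\mathbf{y}}+\tfrac{1}{2}\mathbf{e}_z^T\mathbf{e}_z.
\]

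Differentiating along the trajectory, the manipulations of Theorem \ref{the3} carry over for the first three blocks: Assumption \ref{Assu_2} supplies $-m\|\mathbf{x}-\mathbf{x}^*\|^2$, Assumption \ref{Assu_3} bounds the Hessian term by $h\|\mathbf{v}+\mathcal{P}(\mathbf{x})\|^2$, and the term $k_1\bar{\mathbf{y}}^T(\mathbf{v}+\mathcal{P}(\mathbf{x}))$ produced by $\bar{\mathbf{y}}^T(-\dot{\mathbf{v}})$ cancels the $-k_1(\mathbf{v}+\mathcal{P}(\mathbf{x}))^T\bar{\mathbf{y}}$ coming from $\dot{\mathbf{v}}$, leaving the clean damping $-(k_2-k_1)\|\bar{\mathbf{y}}\|^2$. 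The genuinely new ingredient is the replacement of $\mathcal{P}(\mathbf{x})$ by $\mathcal{P}(\mathbf{z})$ in $\dot{\mathbf{v}}$ together with the consensus block. Writing $\mathcal{P}(\mathbf{z})=\mathcal{P}(\mathbf{x})+(\mathcal{P}(\mathbf{z})-\mathcal{P}(\mathbf{x}))$ and invoking the component-wise Lipschitz bound $\|\mathcal{P}(\mathbf{z})-\mathcal{P}(\mathbf{x})\|\le\max_{i}\{l_i\}\|\mathbf{e}_z\|$ from the Remark after Assumption \ref{Assu_3}, the residual $\mathcal{P}(\mathbf{z})-\mathcal{P}(\mathbf{x})$ enters both through $\dot{\mathbf{v}}$ and through $\dot{\bar{\mathbf{y}}}=-k_2\bar{\mathbf{y}}-\dot{\mathbf{v}}$, yielding two couplings of size $k_1\max_{i}\{l_i\}$ tying $\mathbf{e}_z$ to $\mathbf{v}+\mathcal{P}(\mathbf{x})$ and to $\bar{\mathbf{y}}$. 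Differentiating $\tfrac12\mathbf{e}_z^T\mathbf{e}_z$ contributes the damping $-k_3\lambda_{min}(\mathcal{L}\otimes I_{N\times N}+A_0)\|\mathbf{e}_z\|^2$ (positive since $\mathcal{G}$ is connected) plus the term $-\mathbf{e}_z^T(\mathbf{1}_N\otimes\mathbf{v})$, which after $\mathbf{v}=(\mathbf{v}+\mathcal{P}(\mathbf{x}))-\mathcal{P}(\mathbf{x})$ produces a $\sqrt{N}\|\mathbf{e}_z\|\|\mathbf{v}+\mathcal{P}(\mathbf{x})\|$ term and an $N\max_{i}\{l_i\}\|\mathbf{e}_z\|\|\mathbf{x}-\mathbf{x}^*\|$ term.

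Now I would assemble the bound, but unlike Theorem \ref{the3} I would \emph{not} split the $(h\sqrt{N}\max_{i}\{l_i\}+1)\|\mathbf{v}+\mathcal{P}(\mathbf{x})\|\|\mathbf{x}-\mathbf{x}^*\|$ coupling with a free $\epsilon$; instead I would keep the two leading blocks as the quadratic form in $[\,\|\mathbf{x}-\mathbf{x}^*\|,\ \|\mathbf{v}+\mathcal{P}(\mathbf{x})\|\,]$ governed by the stated matrix $A$. The condition $k_1>\frac{(h\sqrt{N}\max_{i}\{l_i\}+1)^2}{4m}+h$ is exactly $\det A>0$, so $A\succ0$ with $\lambda_{min}(A)>0$ (this is the $\epsilon$-optimized form of the Theorem \ref{the3} bound). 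The $\bar{\mathbf{y}}$ block contributes $-(k_2-k_1)\|\bar{\mathbf{y}}\|^2$, negative under $k_2>k_1$. Collecting every coupling of $\mathbf{e}_z$ with the other three blocks, their aggregate magnitude is at most $(2k_1\max_{i}\{l_i\}+\sqrt{N}+N\max_{i}\{l_i\})\|\mathbf{e}_z\|\,\|E_0\|$, where $E_0$ stacks the first three blocks whose combined negative-definite part has modulus at least $\min\{\lambda_{min}(A),k_2-k_1\}$. Treating $\dot{V}$ as a two-block quadratic form in $(\|E_0\|,\|\mathbf{e}_z\|)$, negative definiteness by the Schur complement requires precisely
\[
k_3\lambda_{min}(\mathcal{L}\otimes I_{N\times N}+A_0)>\frac{(2k_1\max_{i}\{l_i\}+\sqrt{N}+N\max_{i}\{l_i\})^2}{4\min\{\lambda_{min}(A),k_2-k_1\}},
\]
which is the stated lower bound on $k_3$. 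Hence $\dot{V}\le-\rho\|E\|^2$ for some $\rho>0$ with $E=[(\mathbf{v}+\mathcal{P}(\mathbf{x}))^T,(\mathbf{x}-\mathbf{x}^*)^T,\bar{\mathbf{y}}^T,\mathbf{e}_z^T]^T$, giving $\|E(t)\|\le e^{-\rho t}\|E(0)\|$; then $\mathbf{x}\to\mathbf{x}^*$ forces $\mathcal{P}(\mathbf{x})\to0$ by Assumption \ref{Assu_2}, so $\mathbf{v}\to0$ and $\mathbf{y}=\bar{\mathbf{y}}+\mathbf{v}\to0$, which is \eqref{def}.

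The hard part will be the bookkeeping of the third step: tracking exactly which cross terms survive the $\bar{\mathbf{y}}$ cancellation and packaging the $\mathbf{e}_z$ couplings so that the single remaining free gain $k_3$ (tuned last, after $k_1$ fixes $A\succ0$ and $k_2>k_1$ fixes the $\bar{\mathbf{y}}$ damping) dominates all of them at once. Recognizing the $\min\{\lambda_{min}(A),k_2-k_1\}$ block structure—so that the couplings of $\mathbf{e}_z$ to all three other blocks are absorbed through one Schur complement rather than three separate Young's inequalities—is what produces the compact $k_3$ bound appearing in the statement.
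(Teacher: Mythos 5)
Your proposal is correct and follows essentially the same route as the paper's own proof: the same four-block Lyapunov function in the coordinates $(\mathbf{v}+\mathcal{P}(\mathbf{x}),\ \mathbf{x}-\mathbf{x}^*,\ \bar{\mathbf{y}},\ \mathbf{z}-\mathbf{1}_N\otimes\mathbf{x})$, the same cancellation of the $k_1\bar{\mathbf{y}}^T(\mathbf{v}+\mathcal{P}(\mathbf{x}))$ cross terms, the same $2\times 2$ matrix $A$ capturing the position/velocity block (with $k_1$'s bound being $\det A>0$), and the same final absorption of all $\mathbf{z}$-couplings through a single $2\times 2$ positive-definiteness condition in $(\Vert E_2\Vert,\Vert \mathbf{z}-\mathbf{1}_N\otimes\mathbf{x}\Vert)$, which is precisely the paper's matrix $A_1$ and yields the stated $k_3$ bound.
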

\begin{proof}
From \eqref{Y1}-\eqref{Y3}, we can obtain that the concatenated vector form of the closed-loop system can be written as
\begin{equation}\label{Y4}
\begin{aligned}
\dot{\mathbf{x}}=&\mathbf{v}\\
\dot{\mathbf{v}}=&-k_1\mathcal{P}(\mathbf{z})-k_1\mathbf{y}\\
\dot{\mathbf{y}}=&-k_2\mathbf{y}+k_2\mathbf{v}\\
\mathbf{\dot{z}}=&-k_3(\mathcal{L}\otimes I_{N\times N}+A_0)(\mathbf{z}-\mathbf{1}_N\otimes \mathbf{x}),
\end{aligned}
\end{equation}
where $\mathbf{y}=[y_1,y_2,\cdots,y_N]^T$, and $\mathcal{P}(\mathbf{z})=\left[\frac{\partial f_1}{\partial x_1}(\mathbf{z}_1),\frac{\partial f_2}{\partial x_2}(\mathbf{z}_2),\cdots,\frac{\partial f_N}{\partial x_N}(\mathbf{z}_N)\right]^T.$

Define $\bar{\mathbf{y}}=\mathbf{y}-\mathbf{v},$ then, it can be obtained that

\begin{equation}\label{Y5}
\begin{aligned}
\dot{\mathbf{x}}=&\mathbf{v}\\
\dot{\mathbf{v}}=&-k_1\mathcal{P}(\mathbf{z})-k_1\mathbf{v}-k_1\bar{\mathbf{y}}\\
\dot{\bar{\mathbf{y}}}=&-k_2\bar{\mathbf{y}}-(-k_1\mathcal{P}(\mathbf{z})-k_1\mathbf{v}-k_1\bar{\mathbf{y}})\\
\mathbf{\dot{z}}=&-k_3(\mathcal{L}\otimes I_{N\times N}+A_0)(\mathbf{z}-\mathbf{1}_N\otimes \mathbf{x}).
\end{aligned}
\end{equation}

To establish the stability property for \eqref{Y5}, one can define the Lyapunov candidate function as
\begin{equation}
\begin{aligned}
V=V_1+V_2+V_3+V_4,
\end{aligned}
\end{equation}
in which
\begin{equation}
\begin{aligned}
V_1=&\frac{1}{2}(\mathbf{v}+\mathcal{P}(\mathbf{x}))^T(\mathbf{v}+\mathcal{P}(\mathbf{x}))\\
V_2=&\frac{1}{2}(\mathbf{x}-\mathbf{x}^*)^T(\mathbf{x}-\mathbf{x}^*),V_3=\frac{1}{2}\bar{\mathbf{y}}^T\bar{\mathbf{y}}\\
V_4=&\frac{1}{2}(\mathbf{z}-\mathbf{1}_N\otimes \mathbf{x})^T (\mathbf{z}-\mathbf{1}_N\otimes \mathbf{x}).
\end{aligned}
\end{equation}

Then, following the analysis in the proof of Lemma \ref{lemma1} and Theorem \ref{the3}, we get that
\begin{equation}
\begin{aligned}
\dot{V}_1
\leq&-k_1\Vert \mathbf{v}+\mathcal{P}(\mathbf{x})\Vert^2-k_1(\mathbf{v}+\mathcal{P}(\mathbf{x}))^T\bar{\mathbf{y}}\\
&+k_1\max_{i\in\mathbb{N}}\{l_i\}\Vert \mathbf{v}+\mathcal{P}(\mathbf{x})\Vert\Vert \mathbf{z}-\mathbf{1}_N\otimes \mathbf{x}\Vert\\
&+h\Vert \mathbf{v}+\mathcal{P}(\mathbf{x})\Vert^2\\
&+h\sqrt{N}\max_{i\in\mathbb{N}}\{l_i\}\Vert \mathbf{v}+\mathcal{P}(\mathbf{x})\Vert\Vert \mathbf{x}-\mathbf{x}^*\Vert,\\
\end{aligned}
\end{equation}
and
\begin{equation}
\begin{aligned}
\dot{V}_2
\leq &-m||\mathbf{x}-\mathbf{x}^*||^2+||\mathbf{x}-\mathbf{x}^*||||\mathbf{v}+\mathcal{P}(\mathbf{x})||.
\end{aligned}
\end{equation}
Moreover,
\begin{equation}
\begin{aligned}
\dot{V}_3
\leq&-(k_2-k_1)\Vert \bar{\mathbf{y}}\Vert^2+k_1\bar{\mathbf{y}}^T(\mathbf{v}+\mathcal{P}(\mathbf{x}))\\
&+k_1\max_{i\in\mathbb{N}}\{l_i\}\Vert \bar{\mathbf{y}}\Vert\Vert\mathbf{z}-\mathbf{1}_N\otimes \mathbf{x}\Vert.
\end{aligned}
\end{equation}
Furthermore,
\begin{equation}
\begin{aligned}
\dot{V}_4\leq&-k_3\lambda_{min}(\mathcal{L}\otimes I_{N\times N}+A_0){\lVert \mathbf{z}-\mathbf{1}_N \otimes \mathbf{x} \rVert}^2\\
&+\sqrt{N}\Vert\mathbf{z}-\mathbf{1}_N\otimes \mathbf{x}\Vert\Vert \mathbf{v}+\mathcal{P}(\mathbf{x})\Vert\\
&+N\max_{i\in\mathbb{N}}\{l_i\}\Vert\Vert\mathbf{z}-\mathbf{1}_N\otimes \mathbf{x}\Vert\Vert \mathbf{x}-\mathbf{x}^*\Vert.
\end{aligned}
\end{equation}

Hence,
\begin{equation}
\begin{aligned}
\dot{V}\leq&-(k_1-h)\Vert \mathbf{v}+\mathcal{P}(\mathbf{x})\Vert^2-m||\mathbf{x}-\mathbf{x}^*||^2-(k_2-k_1)\Vert \bar{\mathbf{y}}\Vert^2\\
&-k_3\lambda_{min}(\mathcal{L}\otimes I_{N\times N}+A_0){\lVert \mathbf{z}-\mathbf{1}_N \otimes \mathbf{x} \rVert}^2\\
&+(k_1\max_{i\in\mathbb{N}}\{l_i\}+\sqrt{N})\Vert \mathbf{v}+\mathcal{P}(\mathbf{x})\Vert\Vert \mathbf{z}-\mathbf{1}_N\otimes \mathbf{x}\Vert\\
&+(h\sqrt{N}\max_{i\in\mathbb{N}}\{l_i\}+1)\Vert \mathbf{v}+\mathcal{P}(\mathbf{x})\Vert\Vert \mathbf{x}-\mathbf{x}^*\Vert\\
&+k_1\max_{i\in\mathbb{N}}\{l_i\}\Vert \bar{\mathbf{y}}\Vert\Vert\mathbf{z}-\mathbf{1}_N\otimes \mathbf{x}\Vert\\
&+N\max_{i\in\mathbb{N}} \{l_i\}\Vert\mathbf{z}-\mathbf{1}_N\otimes \mathbf{x}\Vert\Vert \mathbf{x}-\mathbf{x}^*\Vert.
\end{aligned}
\end{equation}
Define $A=\left[
            \begin{array}{cc}
              m & -\frac{h\sqrt{N}\max_{i\in\mathbb{N}}\{l_i\}+1}{2} \\
              -\frac{h\sqrt{N}\max_{i\in\mathbb{N}}\{l_i\}+1}{2} & k_1-h \\
            \end{array}
          \right].
$
Then, $A$ is symmetric positive definite by choosing $k_1>\frac{(h\sqrt{N}\max_{i\in\mathbb{N}}\{l_i\}+1)^2}{4m}+h.$ If this is the case,
\begin{equation}
\begin{aligned}
V\leq &-\lambda_{min}(A)||E_1||^2-(k_2-k_1)\Vert \bar{\mathbf{y}}\Vert^2\\
&-k_3\lambda_{min}(\mathcal{L}\otimes I_{N\times N}+A_0){\lVert \mathbf{z}-\mathbf{1}_N \otimes \mathbf{x} \rVert}^2\\
&+(k_1\max_{i\in\mathbb{N}}\{l_i\}+\sqrt{N})\Vert \mathbf{v}+\mathcal{P}(\mathbf{x})\Vert\Vert \mathbf{z}-\mathbf{1}_N\otimes \mathbf{x}\Vert\\
&+k_1\max_{i\in\mathbb{N}}\{l_i\}\Vert \bar{\mathbf{y}}\Vert\Vert\mathbf{z}-\mathbf{1}_N\otimes \mathbf{x}\Vert\\
&+N\max_{i\in\mathbb{N}} \{l_i\}\Vert\mathbf{z}-\mathbf{1}_N\otimes \mathbf{x}\Vert\Vert \mathbf{x}-\mathbf{x}^*\Vert.
\end{aligned}
\end{equation}
where $E_1=[(\mathbf{x}-\mathbf{x}^*)^T,(\mathbf{v}+\mathcal{P}(\mathbf{x}))^T]^T$.

Choose $k_2>k_1$, then,
\begin{equation}
\begin{aligned}
V\leq &-\min\{\lambda_{min}(A),k_2-k_1\}||E_2||^2\\
&-k_3\lambda_{min}(\mathcal{L}\otimes I_{N\times N}+A_0){\lVert \mathbf{z}-\mathbf{1}_N \otimes \mathbf{x} \rVert}^2\\
&+(k_1\max_{i\in\mathbb{N}}\{l_i\}+\sqrt{N})\Vert \mathbf{v}+\mathcal{P}(\mathbf{x})\Vert\Vert \mathbf{z}-\mathbf{1}_N\otimes \mathbf{x}\Vert\\
&+k_1\max_{i\in\mathbb{N}}\{l_i\}\Vert \bar{\mathbf{y}}\Vert\Vert\mathbf{z}-\mathbf{1}_N\otimes \mathbf{x}\Vert\\
&+N\max_{i\in\mathbb{N}} \{l_i\}\Vert\mathbf{z}-\mathbf{1}_N\otimes \mathbf{x}\Vert\Vert \mathbf{x}-\mathbf{x}^*\Vert.
\end{aligned}
\end{equation}
where $E_2=[E_1^T,\bar{\mathbf{y}}^T]^T$.

Hence, by choosing
\begin{equation}
k_3>\frac{(2k_1\max_{i\in\mathbb{N}}\{l_i\}+\sqrt{N}+N\max_{i\in\mathbb{N}} \{l_i\})^2}{4\min\{\lambda_{min}(A),k_2-k_1\}\lambda_{min}(\mathcal{L}\otimes I_{N\times N}+A_0)},
\end{equation}
we get that
\begin{equation}
\begin{aligned}
V\leq &-\lambda_{min}(A_1)||E_3||^2\\
\end{aligned}
\end{equation}
where $\lambda_{min}(A_1)>0,$  $E_3=[E_2^T,\mathbf{z}-(\mathbf{1}_N\otimes \mathbf{x})^T]^T$ and
$A_1=\left[
            \begin{array}{cc}
              \min\{\lambda_{min}(A),k_2-k_1\} & \chi \\
              \chi & k_3\lambda_{min}(\mathcal{L}\otimes I_{N\times N}+A_0) \\
            \end{array}
          \right],
$
where $\chi=-\frac{2k_1\max_{i\in\mathbb{N}}\{l_i\}+\sqrt{N}+N\max_{i\in\mathbb{N}} \{l_i\}}{2}.$

Recalling the definition of the Lyapunov candidate function, the conclusion can be obtained.
\end{proof}
\begin{Remark}
Distributed Nash equilibrium seeking in this paper is achieved based on the idea from \cite{5}-\cite{7} to distributively obtain position estimates via leader-following consensus algorithms. It is worth mentioning that in \cite{5}-\cite{7}, the players are considered as first-order integrators and hence, the Nash equilibrium seeking strategy can be freely designed. Different from \cite{5}-\cite{7}, this paper considers that the players are second-order integrators. With the players' inherent dynamics involved, the Nash equilibrium seeking algorithm should not only drive the players' positions to the Nash equilibrium but also steer their velocities to zero. This indicates that stabilization of the players' dynamics and optimization of the players' cost functions should be achieved simultaneously. In particular, the stabilization of the players' dynamics usually requires the feedback of the players' velocities, which are difficult to be accurately measured in practice. Hence, this paper designs the distributed algorithms without utilizing velocity measurement, which makes the problem more complex. Note that the communication graph is supposed to be fixed in this paper and switching communication topologies (see e.g., \cite{6}\cite{WANGSMCA}-\cite{WenSMACA}) will be addressed in future works.
\end{Remark}
\section{A Numerical example}\label{num_ex}
In this section, the connectivity control game among networked acceleration-actuated mobile sensors considered in \cite{12}-\cite{YEICCA19}\cite{7} is simulated. More specifically, we consider a game with five players whose cost functions are given as
\begin{equation}
\begin{aligned}
&f_{1}(x)=x_{11}^2+x_{11}+2x_{12}^2+x_{12}+1+\Vert x_{1}-x_{3}\Vert^2\\
&f_{2}(x)=3x_{21}^2+2x_{21}+3x_{22}^2+3x_{22}+2+\Vert x_{2}-x_{3}\Vert^2\\
&f_{3}(x)=5x_{31}^2+2x_{31}+5x_{32}^2+2x_{32}+3+\Vert x_{3}-x_{1}\Vert^2\\
&f_{4}(x)=6x_{41}^2+4x_{41}+6x_{42}^2+4x_{42}+4+\Vert x_{4}-x_{2}\Vert^2\\
&f_{5}(x)=8x_{51}^2+6x_{51}+8x_{52}^2+6x_{52}+5+\Vert x_{5}-x_{4}\Vert^2,
\end{aligned}
\end{equation}
respectively.

The unique Nash equilibrium of the game is $[-0.363,-0.235,-0.307,-0.426,-0.227,-0.206,-0.329,$ $-0.347,-0.370,-0.372]^T.$ In the following, we will simulate the centralized algorithms and their distributed counterparts, successively.
\subsection{Centralized Nash equilibrium seeking}
\subsubsection{An observer-based Nash equilibrium seeking strategy}

This section provides numerical verifications for the observer-based method in \eqref{d2}-\eqref{d3}.
In the numerical study, we let $\mathbf{x}(0)=[-0.5,0.5,-1,0,1,0,0,-1,-1,-1.5]^T$. Moreover, other variables in \eqref{d2}-\eqref{d3} are initialized to be zero. The simulation results are given in Figs. \ref{obser_t}-\ref{obser_v}, which plot the players' positions and their velocities, respectively. From Figs. \ref{obser_t}-\ref{obser_v}, we can see that the Nash equilibrium seeking is achieved by the observer-based method in \eqref{d2}-\eqref{d3}.

\begin{figure}
  \centering
  \includegraphics[scale=0.52]{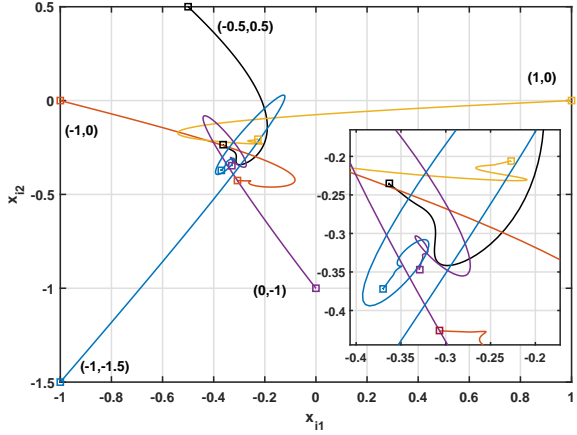}\\
  \caption{The trajectories of players' positions generated by \eqref{d2}-\eqref{d3}. }\label{obser_t}
\end{figure}

\begin{figure}
  \centering
  \includegraphics[scale=0.5]{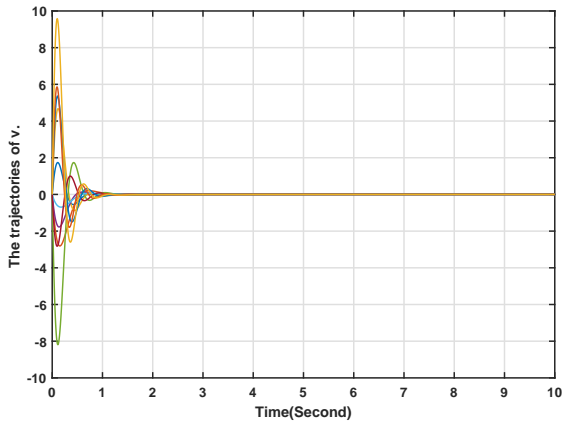}\\
  \caption{The players' velocities generated by \eqref{d2}-\eqref{d3}. }\label{obser_v}
\end{figure}

\subsubsection{A filter-based Nash equilibrium seeking strategy}
This section illustrates the effectiveness of the filter-based method in \eqref{eq1}-\eqref{eq3}. In the simulation, we let $\mathbf{x}(0)=[-0.5,0.5,-1,0,1,0,0,-1,-1,-1.5]^T$. Furthermore, all the other variables in \eqref{eq1}-\eqref{eq3} are initialized at zero. The simulation results generated by \eqref{eq1}-\eqref{eq3} are shown in Figs. \ref{est_1}-\ref{est_2}, which plot the players' positions and velocities, respectively. From the simulation results, we see that the Nash equilibrium seeking can be achieved by utilizing the filter-based method in \eqref{eq1}-\eqref{eq3}.

\begin{figure}
  \centering
  \includegraphics[scale=0.5]{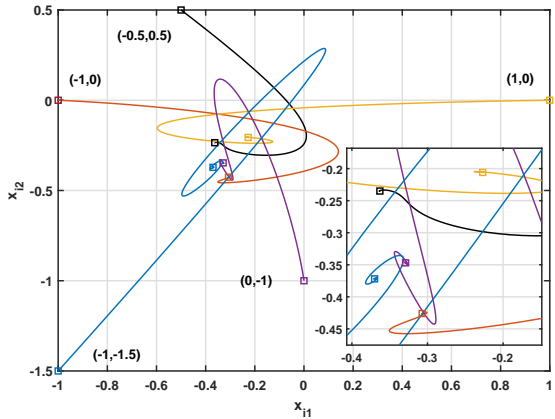}\\
  \caption{The trajectories of players' positions generated by \eqref{eq1}-\eqref{eq3}. }\label{est_1}
\end{figure}

\begin{figure}
  \centering
  \includegraphics[scale=0.5]{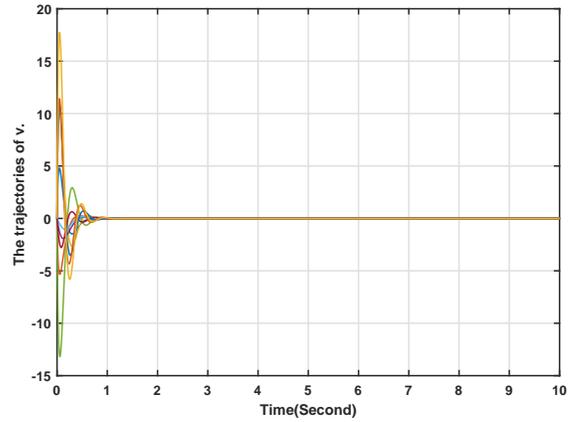}\\
  \caption{The players' velocities generated by \eqref{eq1}-\eqref{eq3}. }\label{est_2}
\end{figure}

\subsection{Distributed Nash equilibrium seeking}
In this section, we provide simulation results for the distributed Nash equilibrium seeking strategies. In the simulations
, the communication topology is depicted in Fig. \ref{G}.
\begin{figure}
  \centering
  \includegraphics[scale=0.35]{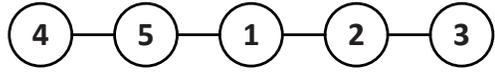}\\
  \caption{The communication graph for the players. }\label{G}
\end{figure}
\subsubsection{An observer-based approach for distributed Nash equilibrium seeking}
This section provides simulation results for the method in  \eqref{dd2}-\eqref{dd3}. In the simulation,  $\mathbf{x}(0)=[-0.5,0.5,-1,0,1,0,0,-1,-1,-1.5]^T,$ and other variables are initialized at zero.
Generated by \eqref{dd2}-\eqref{dd3}, the simulation results are given in Figs. \ref{14_tra}-\ref{14_tra1}, which plot the players' positions and velocities, respectively. From the figures, it can be seen that the Nash equilibrium seeking can be achieved by utilizing the observer-based method in \eqref{dd2}-\eqref{dd3} in a distributed fashion.

\begin{figure}
  \centering
  \includegraphics[scale=0.52]{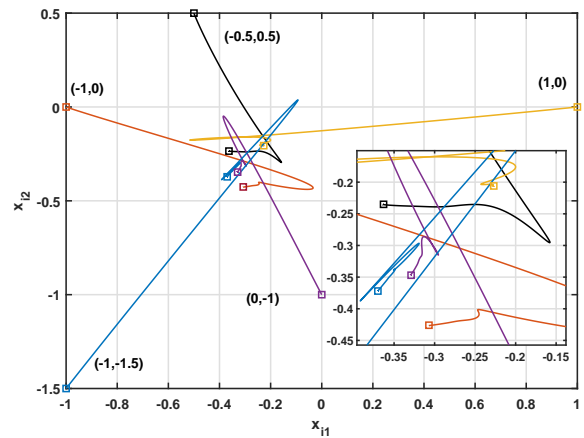}\\
  \caption{The trajectories of players' positions generated by \eqref{dd2}-\eqref{dd3}. }\label{14_tra}
\end{figure}
\begin{figure}
  \centering
  \includegraphics[scale=0.5]{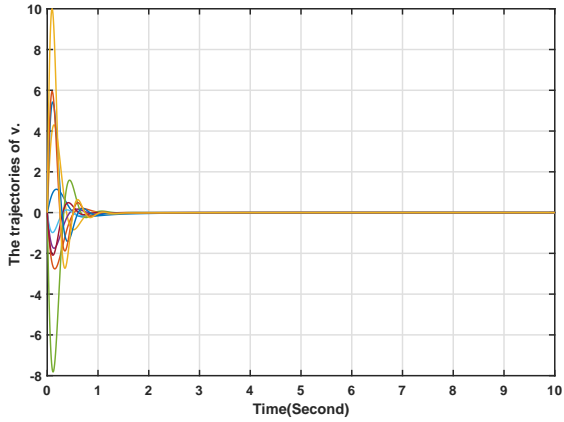}\\
  \caption{The players' velocities generated by \eqref{dd2}-\eqref{dd3}. }\label{14_tra1}
\end{figure}

\subsubsection{A filter-based approach for distributed Nash equilibrium seeking}

This section provides numerical verification for the distributed method in \eqref{Y1}-\eqref{Y2}. In the numerical study,  $\mathbf{x}(0)=[-0.5,0.5,-1,0,1,0,0,-1,-1,-1.5]^T$ and the initial values of other variables in  \eqref{Y1}-\eqref{Y2} are set to be zero. The simulation results generated by \eqref{Y1}-\eqref{Y2} are shown in Figs. \ref{12_tra}-\ref{12_tra1}, which illustrate the players' positions and velocities, respectively. From the figures, it is clear that the Nash equilibrium seeking is achieved in a distributed fashion by utilizing the method in \eqref{Y1}-\eqref{Y2}.

\begin{figure}
  \centering
  \includegraphics[scale=0.5]{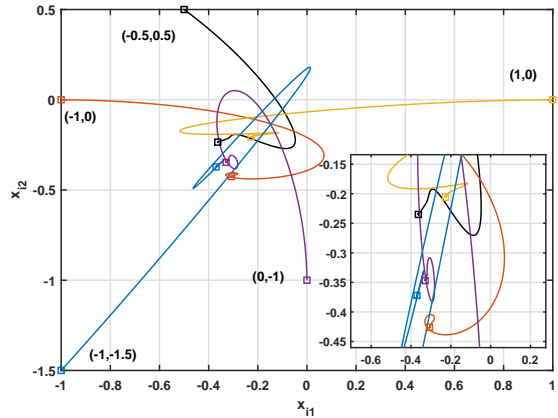}\\
  \caption{The trajectories of players' positions generated by \eqref{Y1}-\eqref{Y2}. }\label{12_tra}
\end{figure}
\begin{figure}
  \centering
  \includegraphics[scale=0.5]{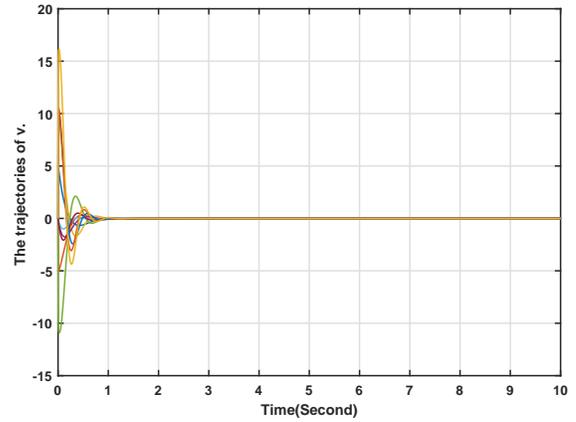}\\
  \caption{The players' velocities generated by \eqref{Y1}-\eqref{Y2}. }\label{12_tra1}
\end{figure}

\section{Conclusions}\label{conc}
This paper develops two Nash equilibrium strategies for games in which the players' actions are governed by second-order integrator-type dynamics. In particular, the players' velocities are supposed to be unavailable for feedback control of the players' positions. Without utilizing velocity measurement, an observer-based approach and a filter-based approach  are designed. Through Lyapunov stability analysis, it is theoretically shown that the players' positions and velocities would be steered to the Nash equilibrium and zero, respectively. Extensions to games in distributed networks are discussed. The presented results show that both the observer-based approach and the filter-based approach can be adapted to solve distributed games, thus showing their extensibility.  It would be interesting future works to extend the current work to the recently formulated $N$-cluster games (see \cite{8}-\cite{YE}) and non-model-based counterparts (see e.g., \cite{YESMAC}).

\end{document}